\documentclass[11pt,a4paper]{article}
\usepackage[utf8]{inputenc}
\usepackage[T1]{fontenc}
\usepackage{enumitem}
\setlist{topsep=1pt,parsep=1pt,itemsep=1pt} 

\usepackage{fourier} 

\usepackage{amsmath}
\usepackage{amsfonts}
\usepackage{amssymb}
\usepackage{amsthm}
\usepackage{ upgreek }





\usepackage{amsfonts}





\usepackage{csquotes}

\usepackage{pdflscape}

\usepackage{stmaryrd}




\usepackage[colorlinks=true, linkcolor=teal, linktocpage=true, citecolor=teal]{hyperref} %




\usepackage{dsfont} 
\usepackage{cleveref}

\usepackage{mathrsfs}

\usepackage{tikz-cd}
 



\newtheorem{theorem}{Theorem}[section]
\newtheorem{proposition}[theorem]{Proposition}
\newtheorem{lemma}[theorem]{Lemma}
\newtheorem{corollary}[theorem]{Corollary}

\theoremstyle{definition}

\newenvironment{example}
  {\pushQED{\qed}\examplex}
  {\popQED\endexamplex}

\newenvironment{remark}
  {\pushQED{\qed}\remarkx}
  {\popQED\endremarkx}





\numberwithin{equation}{section}

\newcommand{\Wick}[1]{{\mathpunct{:}#1\mathpunct{:}}}

%
%


\input{macros.sty}
\input{graphs_gibbs_phi4.sty}

\newcommand{\unit}{\mathbf{1}}

\newcommand{\PiNBPHZ}{\Pi_N^{\mathrm{BPHZ}}}
\DeclareMathOperator{\id}{id}
\newcommand{\DeltaCK}{\Delta_{\mathrm{CK}}}
\newcommand{\DeltaCKred}{\mathring\Delta_{\mathrm{CK}}}
\DeclareMathOperator{\W}{W}
\DeclareMathOperator{\Aut}{Aut}
\newcommand{\ph}{\varphi}

\newcommand{\tM}{\textup{M}}
\newcommand{\tF}{\textup{F}}
\newcommand{\fe}{\mathfrak{e}}
\newcommand{\fl}{\mathfrak{l}}
\newcommand{\fn}{\mathfrak{n}}

\newcommand{\F}{\mathbf{F}}
\newcommand{\Fminus}{\mathbf{F_-}}
\newcommand{\spanF}{\langle\mathbf{F}\rangle}
\newcommand{\spanFminus}{\langle\mathbf{F_-}\rangle}
\newcommand{\spansF}{\langle \mathscr{F}\rangle}
\newcommand{\sFminus}{\mathscr{F}_-}
\newcommand{\spansFminus}{\langle \mathscr{F}_- \rangle}
\newcommand{\M}{\mathbf{M}}
\newcommand{\Mminus}{\mathbf{M_-}}
\newcommand{\spanM}{\langle\mathbf{M}\rangle}
\newcommand{\spanMminus}{\langle\mathbf{M_-}\rangle}
\newcommand{\sM}{\mathscr{M}}
\newcommand{\spansM}{\langle \mathscr{M}\rangle}
\newcommand{\sMminus}{\mathscr{M}_-}
\newcommand{\spansMminus}{\langle \mathscr{M}_- \rangle}

\begin{document}


\title{Perturbative renormalisation of the $\Phi^4_{4-\eps}$ model \\
via generalized Wick maps}
\author{Nils Berglund, Tom Klose, and Nikolas Tapia}
\date{July 4, 2025. Revised version, February 19, 2026.}   

\maketitle

\begin{abstract}
  We consider the perturbative renormalisation of the $\Phi^4_d$ model from Euclidean Quan\-tum Field Theory for any, possibly non-integer dimension $d<4$. The so-called BPHZ renormalisation, named after Bogoliubov, Parasiuk, Hepp and Zimmermann, is usually encoded into extraction-contraction operations on Feynman diagrams, which have a complicated combinatorics. We show that the same procedure can be encoded in the much simpler algebra of polynomials in two unknowns $X$ and $Y$, which represent the fourth and second Wick power of the field. In this setting, renormalisation takes the form of a ``Wick map'' which maps monomials into Bell polynomials. The construction makes use of recent results by Bruned and Hou on multiindices, which are algebraic objects of intermediate complexity between Feynman diagrams and polynomials.
\end{abstract}

\leftline{\small 2020 {\it Mathematical Subject Classification.\/}
81T15, 
81T18, 
16T05.
}

\noindent{\small{\it Keywords and phrases.\/}
Renormalisation,
Wick exponential, 
Bell polynomials, 
Feynman diagrams,
multi-indices, 
cumulant expansion, 
Hopf algebra, 
Phi4 theory.
}


\tableofcontents


\section{Introduction}
\label{sec:intro}

The $\Phi^4$ model is a famous toy model in Euclidean Quantum Field theory, 
featuring a quartic perturbation to a quadratic Hamiltonian. We are interested 
here in the model defined on the $d$-dimensional torus, which is denoted $\Phi^4_d$. 
In dimension $d=1$, the Gibbs measure associated to the Hamiltonian is well-defined. 
In higher dimension, however, a renormalisation procedure is needed to make sense 
of the Gibbs measure. Such a procedure can be implemented by fixing an ultra-violet 
cut-off $N$, and first restricting the measure to functions whose Fourier modes 
have a wave number of size $N$ at most. This truncated Gibbs measure becomes 
undefined as $N$ is sent to infinity, but in some cases one can fix this problem 
by adding suitable $N$-dependent terms to the Hamiltonian, which diverge as $N$ 
goes to infinity, but make the limiting measure well-defined. 

The easiest case occurs in dimension $d=2$. Then the Gibbs measure can be renormalised 
by replacing the quartic term of the Hamiltonian by its fourth 
Wick power, which is defined by a Hermite polynomial with a variance diverging 
like $\log(N)$. The variance is that of the truncated Gaussian free field describing 
the uncoupled system. In dimension $d=3$, the situation is considerably more difficult. 
In addition to the Wick counterterm, which now diverges like $N$, an additional 
counterterm known as mass renormalisation and which diverges like $\log(N)$, has to 
be added to the quadratic part of the Hamiltonian. In dimensions $d \geq 4$, on the other hand, 
it has been shown that the model is trivial~\cite{Aizenmann-Duminil-Copin, Froehlich_1982}, in the sense 
that any viable renormalised version will converge to a Gaussian model as $N\to\infty$.

Numerous approaches have been developed to deal with the complicated combinatorics 
of the $\Phi^4_3$ model. The earliest works by Glimm and Jaffe and by Feldman 
tackled the problem via a detailed combinatorial analysis of Feynman 
diagrams~\cite{Glimm_Jaffe_68,Glimm_Jaffe_73,Feldman74,Glimm_Jaffe_81}, 
entailing very long and technical proofs. The works~\cite{BCGNOPS78,BCGNOPS_80} 
introduced the idea of using a renormalisation group approach, consisting in a 
decomposition of the covariance of the underlying Gaussian reference measure into 
scales similar to Paley--Littlewood blocks, that can be iteratively integrated out. 
This method was further perfected in~\cite{Brydges_Dimock_Hurd_CMP_95}, 
using polymers to control error terms, an approach based on ideas from 
statistical phy\-sics~\cite{Gruber_Kunz_71}. Another approach, 
provided in~\cite{Brydges_Frohlich_Sokal_CPM83,Brydges_Frohlich_Sokal_CMP83_RW}, 
yields bounds on correlation functions (also known as $n$-point functions), by using the Gibbs measure 
as a generating function. This involves the derivation of skeleton inequalities, 
which were obtained up to third order in~\cite{Brydges_Frohlich_Sokal_CPM83}, 
and later extended to all orders in~\cite{Bovier_Felder_CMP84}. 
A relatively compact derivation of bounds on the partition function based on the Bou\'e--Dupuis 
formula was recently obtained in~\cite{Barashkov_Gubinelli_18}. 
The latter work is part of a programme called \lq\lq stochastic quantisation\rq\rq\ 
which has received a lot of interest in the last ten years due to the advent of novel 
solution techniques for singular stochastic 
PDEs~\cite{Hairer1, GubinelliImkellerPerkowski, kupiainen_renorm_group_spde, Duch_2025}. %
It is far beyond the scope of this introduction to review those recent developments, 
so we only list a few representative works pertaining to the~$\Phi^4_d$ model: 
\cite{mourrat_weber_infinity, hairer_matetski_18_discretisation, 
catellier_chouk_paracontrolled_phi43, gubinelli_hofmanova_elliptic, gubinelli_hofmanova, Shen_Zhu_Zhu_23}. 

The $\Phi^4_d$ model can be extended to non-integer values of the dimension $d$, 
by modifying the power with which the Green function diverges near the origin; 
the corresponding SPDE was introduced in~\cite{BMS2003}.
This makes it possible to study all models for $d\in[3,4)$, and examine their 
behaviour as $d$ approaches the critical value $4$. As $d$ increases, the combinatorics 
becomes increasingly difficult, as more and more counterterms have to be added to the 
Hamiltonian. 
From the viewpoint of stochastic quantisation, this procedure has been understood in a 
non-perturbative way 
in~\cite{Chandra_Moinat_Weber_23, Esquivel_Weber_24, Duch_Gubinelli_Rinaldi_23, Broux_Otto_Steele_25}.

In this work, we expand on an idea introduced in~\cite{Berglund_Klose_22}, 
based on BPHZ renormalisation and inspired by algebraic methods.
BPHZ renormalisation, named after Bogoliubov, Parasiuk, Hepp and 
Zimmermann~\cite{Bogoliubow_Parasiuk,Hepp66,Zimmermann69}, allows to systematically 
analyse the divergence of Feynman diagrams obtained by a perturbative expansion 
of expectations of observables under the Gibbs measure. As realised by Connes and 
Kreimer~\cite{Connes_Kreimer_2000a,Connes_Kreimer_2001}, there is a Hopf-algebraic
structure underlying Feynman diagrams, which encodes the extraction-contraction 
operations occurring in BPHZ renormalisation. See~\cite{Hairer_BPHZ} for a modern 
exposition of this procedure. 

The main new idea in~\cite{Berglund_Klose_22} is that the complicated algebra of 
extraction-contraction operations on Feynman diagrams can be encoded in a much simpler 
way by operations on polynomials in only two variables $X$ and $Y$, representing, 
respectively, the quartic and quadratic parts of the Hamiltonian. This procedure is inspired 
by ideas in~\cite{EFPTZ18} on deformation of copro\-ducts. Our main result, 
\Cref{thm:main}, states that one can indeed encode BPHZ renormalisation on 
the level of polynomials, by using a rather simple \lq\lq generalised Wick map\rq\rq, 
which consists in replacing powers of $X$ by multiples of $Y$, with coefficients 
given by mass renormalisation counterterms. \Cref{cor:BPHZ} then provides 
a simple expression for an asymptotic expansion of the model's partition function. 

The remainder of this paper is organised as follows. \Cref{sec:results} contains 
a definition of the $\Phi^4_d$ model for general, not necessarily integer dimensions $d$, 
introduces perturbative BPHZ renormalisation, and states the main results of this work. In 
\Cref{sec:wick}, we present the construction of the \lq\lq Wick map\rq\rq, and 
discuss its connections to cumulant expansions and Bell polynomials. \Cref{sec:multiindices}
contains the definition of multi-indices in the setting 
of~\cite{bruned2025renormalisingfeynmandiagramsmultiindices}, and an explicit expression 
for the coproduct of the multi-indices representing powers of the quartic interaction term. 
Finally, \Cref{sec:comute} contains the proof of the main result, which amounts 
to showing that a certain diagram between polynomials and multi-indices is commutative.

\subsection*{Acknowledgements}

TK thanks the Institut Denis Poisson in Orléans for its generous financial 
support and the warm hospitality during his visit in April 2025.
All authors thank the referees for their careful reading of the manuscript which lead to an improved presentation.

\subsection*{Funding}

TK is supported by a UKRI Horizon Europe Guarantee MSCA Postdoctoral Fellowship 
(UKRI, SPDEQFT, grant reference EP/Y028090/1). Views and opinions ex\-pres\-sed are however 
those of the author(s) only and do not necessarily reflect those of UKRI. In particular, 
UKRI cannot be held responsible for them.
NT acknowledges funding by the Deutsche Forschungsgemeinschaft (DFG, German Research Foundation) - CRC/TRR 388 \lq\lq Rough Analysis,
Stochastic Dynamics and Related Fields\rq\rq -- Project ID 516748464.



\section{Main result}
\label{sec:results} 


\subsection[The $\Phi^4_d$ model from Euclidean Quantum Field Theory]
{The $\boldsymbol{\Phi^4_d}$ model from Euclidean Quantum Field Theory}
\label{ssec:phi4d} 

We start by defining the $\Phi^4_d$ measure for integer values of the dimension $d\geqs1$. 
Let $\T^d$ denote the $d$-dimensional torus. For a function $\phi:\T^d\to\R$ and parameters $m>0, \alpha\geqs0$, define the energy 
\begin{equation}
\label{eq:phi4_energy} 
 \mathscr{H}_{d,\alpha}(\phi) = \int_{\T^d} 
 \biggbrak{\frac12 \norm{\nabla\phi(x)}^2 + \frac{m^2}{2} \phi(x)^2 + \alpha \phi(x)^4} \6x\;.
\end{equation} 
The $\Phi^4_d$ measure is the Gibbs measure associated with $\mathscr{H}_{d,\alpha}$, defined formally as 
\begin{equation*}
 \mu_{d,\alpha}(\6\phi) \sim \frac{1}{\sZ_{d,\alpha}} \e^{-\mathscr{H}_{d,\alpha}(\phi)} \6\phi\;,
\end{equation*} 
where the \emph{partition function} $\sZ_{d,\alpha}$ is the normalisation that renders $\mu_{d,\alpha}$ 
a probability measure. 

As such, this definition does not make sense, since there is no Lebesgue measure 
$\6\phi$ on an infinite-dimensional function space. However, in the case $\alpha = 0$, one can 
define $\mu_{d,0}$ as the Gaussian measure with covariance $-\Delta + m^2$, also called (massive) 
\emph{Gaussian 
free field} with this covariance. Taking this Gaussian free field as reference measure, instead of 
Lebesgue measure, one can define the expectation under $\mu_{d,\alpha}$ of a test function $F$ as 
\begin{equation*}
 \expecin{\mu_{d,\alpha}}{F} = \frac{\sZ_{d,0}}{\sZ_{d,\alpha}} 
 \biggexpecin{\mu_{d,0}}{F(\phi) \exp\biggset{-\alpha\displaystyle\int_{\T^d}\phi(x)^4\6x}}\;.
\end{equation*} 
In particular, setting $F = 1$, we obtain 
\begin{equation}
\label{eq:Zratio} 
 \frac{\sZ_{d,\alpha}}{\sZ_{d,0}} = 
 \biggexpecin{\mu_{d,0}}{\exp\biggset{-\alpha\displaystyle\int_{\T^d}\phi(x)^4\6x}}\;.
\end{equation} 
One can show that in dimension $d=1$, the ratio of partition functions is indeed well-defined. 
However, in dimensions $d\geqs2$, this is no longer the case, since the Gaussian free field has 
almost surely infinite variance. Therefore, a renormalisation procedure becomes necessary to 
have a chance of defining the $\Phi^4_d$ measure. 

Given an integer $N\geqs0$, called \emph{ultra-violet cutoff}, denote by $\cK_N$ the space of functions $\phi:\T^d\to\R$ spanned by Fourier basis functions $e_k$ with $\abs{k} = \sum_{i=1}^d \abs{k_i} \leqs N$. 
When restricted to functions in $\cK_N$, the ratio~\eqref{eq:Zratio} is well-defined. However, for $d\geqs2$ 
the ratio does not admit a well-defined limit as $N\to\infty$. 

In dimension $d = 2$, the solution consists in replacing $\phi(x)^4$ by its fourth Wick power, 
defined as 
\begin{equation*}
 \Wick{\phi^4(x)} = H_4(\phi(x),C_N)\;,
\end{equation*} 
where $C_N$ denotes the variance of the Gaussian free field, which diverges like $\log(N)$, 
and $H_4$ is the fourth Hermite polynomial with variance $C_N$, given by 
\begin{equation*}
 H_4(\phi,C_N) = \phi^4 - 6 C_N \phi^2 + 3 C_N^2\;.
\end{equation*} 
Note that this amounts to modifying the mass $m$ in the energy~\eqref{eq:phi4_energy} by a quantity 
depending on the cutoff $N$, and to adding an $N$-dependent constant to the energy. The extra terms 
in the ener\-gy are therefore called \emph{mass renormalisation} and \emph{energy renormalisation} 
counter\-terms. One can then show that with these counterterms included in the definition of 
$\sH_{2,\alpha}$, the ratio~\eqref{eq:Zratio} does converge to a finite limit as $N\to\infty$. 

In dimension $d=3$, Wick renormalisation is no longer sufficient, and additional mass and energy 
renormalisation terms have to be added to the energy. 
In dimension $d=4$, on the other hand, is is known that no renormalisation procedure of the above 
type will lead to a meaningful result. More precisely, the $\Phi^4_4$ model is trivial in the 
sense that the associated renormalised Gibbs measure is Gaussian~\cite{Aizenmann-Duminil-Copin}, 
as is~$\Phi^4_d$ when~$d > 4$~\cite{Froehlich_1982}.

This raises the question whether the limit $d\to 4_-$ can be better understood, by allowing for 
non-integer dimensions. This is known as the $\Phi^4_{4-\eps}$ model. Non-integer dimensions 
can be interpreted, for $3 < d < 4$, by keeping the three-dimensional torus $\T^3$ as domain 
of integration, but changing the behaviour of the \emph{Green function}. The Green function associated 
with the covariance operator $-\Delta+m^2$ on $\T^d$ is defined as 
\begin{equation}
\label{eq:Green_fct} 
 G_d(x,y) = \bigexpecin{\mu_{d,0}}{\phi(x) - \phi(y)} 
 = \sum_{k\in\Z^d} e_k(x) \bigbrak{-\Delta+m^2}^{-1} e_k(y)\;.
\end{equation} 
Note that translation invariance of the model implies that $G_d(x,y) = G_d(x-y,0)$. 
In dimension $d = 3$, the Green function is known to be defined everywhere except on 
the diagonal $\set{x=y}$, and that it diverges like $\norm{x-y}^{-1}$ when approaching the 
diagonal. For $3 < d < 4$, a natural choice for the Green function is thus 
\begin{equation*}
 G_d(x,y) \sim \frac{1}{\norm{x-y}^{d-2}}\;.
\end{equation*} 
The general form of the renormalised energy for ultraviolet cutoff $N$ is expected to be 
\begin{equation}
\label{eq:phi4_energy_renorm} 
 \sH_{d,\alpha,N}(\phi) = \int_{\T^d} 
 \biggbrak{\frac12 \norm{\nabla\phi(x)}^2 + \frac{m^2}{2} \phi(x)^2 + \alpha \Wick{\phi(x)^4}
 + \beta \Wick{\phi(x)^2} + \gamma}  \6x\;,
\end{equation} 
for suitable mass renormalisation $\beta = \beta(d,\alpha,N)$ and energy renormalisation counterterms $\gamma = \gamma(d,\alpha,N)$. Introducing the notations 
\begin{equation*}
 X = \int_{\T^d} \Wick{\phi(x)^4}\6x\;, \qquad 
 Y = \int_{\T^d} \Wick{\phi(x)^2}\6x\;,  
\end{equation*} 
the ratio~\eqref{eq:Zratio} of partition functions can be written as 
\begin{equation}
\label{eq:Zratio_XY} 
 \frac{\sZ_{d,\alpha}}{\sZ_{d,0}} = 
 \bigexpecin{\mu_{d,0}}{\e^{-\alpha X - \beta Y - \gamma}}
 = \e^{-\gamma} \bigexpecin{\mu_{d,0}}{\e^{-\alpha X - \beta Y}}\;.
\end{equation} 


\subsection{Perturbative renormalisation and Feynman diagrams}
\label{ssec:Feynman} 

Perturbative renormalisation consists in expanding the exponential in~\eqref{eq:Zratio_XY}, 
yielding the formal series 
\begin{equation}
\label{eq:Zratio_expansion} 
 \frac{\sZ_{d,\alpha}}{\sZ_{d,0}} \asymp \e^{-\gamma} 
 \sum_{n\geqs 1} \frac{(-1)^n}{n!} \bigexpecin{\mu_{d,0}}{(\alpha X + \beta Y)^n}\;.
\end{equation} 
We emphasize that this series is known \emph{not} to converge~\cite{Jaffe_1965}. 
Instead, it is an asymptotic series 
of Gevrey-1 type, which is Borel summable (at least for 
$d\leqs 3$, cf.~\cite{Eckmann_Magnen_Seneor_75,Magnen_Seneor_77}). 

By the Isserlis--Wick theorem, expectations of monomials in $X$ and $Y$ can be written in 
terms of integrals of products of Green functions. For instance,  
\begin{align*}
 \bigexpecin{\mu_{d,0}}{X^2} 
 &= \int_{\T^d} \int_{\T^d} \bigexpecin{\mu_{d,0}}{\Wick{\phi(x)^4} \, \Wick{\phi(y)^4}} \6x\6y \\
 &= \int_{\T^d} \int_{\T^d} 4! \, \bigexpecin{\mu_{d,0}}{\phi(x)\phi(y)}^4 \6x\6y \\
 &= 4! \int_{\T^d} \int_{\T^d} G_{d,N}(x,y)^4 \6x\6y\;.
\end{align*} 
Here the truncated Green function $G_{d,N}$ is defined as in~\eqref{eq:Green_fct}, 
with the power $-1$ replaced by a suitable power $-s$ --- power-counting arguments imply 
that $s=(7-d)/4$. In addition, the sum should be 
restricted to $k\in\Z^3$ with $\abs{k}\leqs N$. This can be conveniently represented 
by the graphical notation 
\begin{equation*}
 \bigexpecin{\mu_{d,0}}{X^2} 
 = 4! \, \Pi_N \FGIV\;, 
\end{equation*} 
where the vertices of the graph correspond to the two integration variables, the four 
edges correspond to the four copies of the Green function, and $\Pi_N$ denotes the \emph{valuation
map}, which associates to the diagram a real number given by an integral of a product of 
Green functions. This is an example of \emph{Feynman diagram} (in this case, a vacuum diagram, 
since there are no free legs). 

For a general multigraph $\Gamma = (\sV,\sE)$ (multiple edges are allowed), the valuation is defined by 
\begin{equation*}
 \Pi_N(\Gamma) = \int_{(\T^3)^{\abs{\sV}}} \prod_{e\in\sE} G_{d,N}(x_{e_+},x_{e,-}) \6x\;, 
\end{equation*} 
where $x_{e_+}$ and $x_{e_-}$ denote the endpoints of the edge $e$. We further represent 
the Wick powers $X$ and $Y$ by 
\begin{equation}
\label{def:XY} 
 X = \FGfour, \qquad
 Y = \FGtwo\;.
\end{equation} 
With these notations, the expectation of a monomial is given by 
\begin{equation*}
 \bigexpecin{\mu_{d,0}}{X^nY^m}
 = \sum_p \Pi_N(\Gamma_p)\;,
\end{equation*} 
where $p$ runs over all \emph{pairwise matchings} of the legs of $X$ and $Y$. 
Since we are taking the expectation, these pairwise matchings need to be \emph{perfect}, i.e. they \emph{cannot} leave any free legs, corresponding to the above-mentioned case of vacuum diagrams.

\smallskip
\noindent
\textbf{Convention:}
From here onwards, we will simple write \enquote{(Feynman) diagrams} to mean \enquote{vacuum Feynman diagrams} as those are all we need.

Instead of expanding the exponential directly as in~\eqref{eq:Zratio_expansion}, it turns out 
to be advantageous to expand its logarithm, that is, to use a \emph{cumulant expansion}. The 
\emph{linked-cluster theorem}~\cite{Brouder09, Rivasseau_09, Salmhofer_Renormalization} states that 
\begin{equation}
\label{eq:cumulant_expansion} 
 \log \frac{\sZ_{d,\alpha}}{\sZ_{d,0}} 
 = \log \bigexpecin{\mu_{d,0}}{\e^{-\alpha X - \beta Y - \gamma}}
 \asymp 
 -\gamma + \sum_{n\geqs0} \frac{(-1)^n}{n!} \Pi_N \sP\bigpar{(\alpha X + \beta Y)^n}\;,
\end{equation} 
where $\sP$ denotes the sum over \emph{connected} pairwise matchings, i.e. connected vacuum diagrams.
In some more detail, the map~$\sP$ encodes the following operation: Probabilistically, $X$ and~$Y$ live in the $4$-th respectively the $2$-nd homogeneous Wiener chaos and their (joint) moments can be computed via the so-called diagram formula, see \cite[Section 7]{Peccati_Taqqu_book}. This gives rise to a sum over Feynman diagrams which, as is part of~$\sP$, are then projected onto \emph{connected} diagrams. We refer to \cite[Remark~3.3]{Berglund_Klose_22} for details.


\subsection{BPHZ renormalisation}
\label{ssec:BPHZ} 

BPHZ renormalisation, a procedure named after Bogoliubov, Parasiuk, Hepp and 
Zimmermann~\cite{Bogoliubow_Parasiuk, Hepp66, Zimmermann69}, allows to 
determine expressions for the counterterms $\beta$ and $\gamma$ ensuring that the 
terms obtained by expanding~\eqref{eq:cumulant_expansion} are uniformly bounded in 
the cut-off $N$. 

The first step is to associate to a diagram $\Gamma = (\sV,\sE)$ the \emph{degree} 
\begin{equation} \label{e:deg_graphs}
 \deg(\Gamma) = d(|\sV|-1) - (d-2)|\sE|\;.
\end{equation} 
Diagrams of non-positive degree are called \emph{divergent}. 
Simple examples suggest that $\Pi_N(\Gamma)$ diverges like $N^{-\deg(\Gamma)}$ 
if $\deg(\Gamma) < 0$, and like $\log(N)$ if $\deg(\Gamma) = 0$, while 
$\Pi_N(\Gamma)$ is uniformly bounded in $N$ if $\deg(\Gamma) > 0$. This is not true in general, because 
a non-divergent diagram can contain subdiagrams that are divergent, making the 
valuation diverge. However, a deep result of the theory states that there exists 
a modification of $\Gamma$ which behaves like its degree.

\begin{theorem}[BPHZ renormalisation]
\label{thm:BPHZ} 
There exists a linear map $\sA$, acting on Feynman diagrams, such that 
\begin{equation*}
 \Pi_N(\sA(\Gamma)) \asymp 
 \begin{cases}
  N^{-\deg(\Gamma)} & \text{if $\deg(\Gamma)<0$\;,} \\
  \log(N)^\zeta  & \text{if $\deg(\Gamma)=0$\;,}
 \end{cases}
\end{equation*}
for a finite integer $\zeta$, while $\Pi_N(\sA(\Gamma))$ is bounded 
uniformly in $N$ if $\deg(\Gamma) > 0$. 
\end{theorem}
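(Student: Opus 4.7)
The plan is to construct $\cA$ as a recursive Bogoliubov subtraction that removes all proper sub-divergences, so that what remains in $\Pi_N(\cA(\Gamma))$ is controlled by the overall power counting of $\Gamma$ alone. The two ingredients are Zimmermann's forest formula (providing the combinatorial definition of $\cA$) and Weinberg's convergence theorem (providing the analytic bound).

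First I would define $\cA$ recursively: for each connected vacuum diagram $\Gamma$,
\begin{equation}
 \cA(\Gamma) = \Gamma - \sum_{\gamma\subsetneq\Gamma} T_\gamma\bigbrak{\cA(\gamma)} \cdot (\Gamma/\gamma)\;,
\end{equation}
where the sum is over connected proper subdiagrams $\gamma$ with $\deg(\gamma)\leqs 0$, the operator $T_\gamma$ extracts the divergent part (polynomial in $N$ and $\log N$) of the valuation of $\gamma$ viewed as a function of the external legs attaching it to the complement, and $\Gamma/\gamma$ is the contraction of $\gamma$ to a single vertex in $\Gamma$. This construction is naturally encoded as a twisted antipode in the Connes--Kreimer Hopf algebra with coproduct $\DeltaCK$ restricted to divergent extractions, and unwinding the recursion yields Zimmermann's forest formula
\begin{equation}
 \cA(\Gamma) = \sum_{\cF\in\sF(\Gamma)} \prod_{\gamma\in\cF} (-T_\gamma)\, \Gamma\;,
\end{equation}
where $\sF(\Gamma)$ ranges over forests of proper divergent subdiagrams of $\Gamma$. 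Note that $\cA$ performs no overall subtraction on $\Gamma$ itself, which is why the asymptotic growth of $\Pi_N(\cA(\Gamma))$ in the divergent cases is still visible.

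Next I would establish the asymptotic bound by induction on the number of divergent subdiagrams, with the main analytic input being a Hepp sector decomposition of the integration domain $(\T^d)^{\abs{\cV}}$. Sectors are indexed by total orderings of the pairwise distances $\norm{x_v - x_{v'}}$, and in each sector a canonical sequence of nested clusters emerges, starting from the innermost pair and growing outward. The key combinatorial fact is that the forest formula above matches this nested cluster structure: for every sector, the Taylor subtractions indexed by the forest exactly cancel the leading divergences at each scale, so the remaining integrand is uniformly dominated by the Green function product associated with the overall diagram $\Gamma$. Integration then produces $N^{-\deg(\Gamma)}$ when $\deg(\Gamma) < 0$, a logarithmic factor $\log(N)^\zeta$ at $\deg(\Gamma) = 0$ where $\zeta$ counts the maximal nesting of marginally divergent subforests, and a uniform bound when $\deg(\Gamma) > 0$.

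The main obstacle is precisely this uniform Hepp-sector estimate: proving that after the forest subtractions the integrand is uniformly power-counting convergent at every intermediate scale, and that the subtraction operators $T_\gamma$ commute well enough with the outer integrations for the bounds to survive summation over all forests. The non-integer scaling exponent $s = (7-d)/4$ of the Green function $G_{d,N}$ must be handled carefully throughout. I expect the standard BPHZ arguments of Zimmermann and Weinberg to carry over with cosmetic modifications, but verifying uniformity in the ultra-violet cut-off $N$ for this scaling, together with the identification of the logarithmic exponent $\zeta$ at the marginal degree, is the technical heart of the proof.
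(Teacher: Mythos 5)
You should first note that the paper does not prove Theorem~\ref{thm:BPHZ}: it is imported as a known result, with pointers to \cite{Hairer_BPHZ} and \cite{BB19}, and with $\cA$ taken to be the Connes--Kreimer antipode~\eqref{eq:antipode}. There is therefore no in-paper argument to compare against, and your proposal must be judged against those references. Your strategy --- Bogoliubov's recursion, unwound into Zimmermann's forest formula, combined with a Hepp-sector decomposition and Weinberg-type power counting --- is indeed the classical route, and in its position-space, multiscale form it is essentially the argument of \cite{Hairer_BPHZ}. Two points of friction with the paper's conventions deserve a remark. First, your recursion inserts an operator $T_\gamma$ extracting the \lq\lq divergent part\rq\rq\ of the subgraph's valuation, whereas the paper's $\cA$ is the plain antipode, so that (by multiplicativity of $\Pi_N$) the counterterm attached to $\gamma$ is the full number $\Pi_N(\cA(\gamma))$, finite part included; the two choices differ by a change of character and yield the same asymptotics, and since the theorem is an existence statement either works, but you should say which map you are actually exhibiting. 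Second, your $T_\gamma$ \lq\lq viewed as a function of the external legs\rq\rq\ tacitly includes higher-order Taylor subtractions; in the paper's formalism these appear as node and edge decorations (Remark~\ref{rem:Taylor} and Section~\ref{ssec:Taylor}), and one needs the observation that for $d<4$ all subdivergences have degree larger than $-2$, so only first-order jets arise and their contributions vanish by parity.

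The genuine gap is the one you name yourself: the claim that \lq\lq for every sector, the Taylor subtractions indexed by the forest exactly cancel the leading divergences at each scale\rq\rq\ is the entire content of the theorem, and as written it is a statement of intent rather than a proof. In particular, in the presence of overlapping divergences it is not the case that a single forest matches the nested cluster structure of a given sector; one must show that in each Hepp sector exactly one family of subtractions is effective and the remaining forests contribute harmlessly (Zimmermann's $\lambda$-forests, or the safe/unsafe splitting of the forest sum in the multiscale literature). Establishing this, uniformly in the cut-off $N$ and for the non-integer scaling exponent $s=(7-d)/4$ of $G_{d,N}$, and identifying $\zeta$ with the maximal nesting depth of degree-zero subgraphs, is precisely what the cited references supply; your outline correctly locates the difficulty but does not resolve it, so as it stands it is a roadmap rather than a proof.
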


%
For a modern exposition, see~\cite{Hairer_BPHZ}, as well as~\cite{BB19}.
The linear map $\sA$ is in fact the antipode of the Connes--Kreimer 
Hopf algebra on Feynman diagrams. 
To define this Hopf algebra, we introduce the following spaces:
\begin{itemize}
\item $\F$ is a collection of \emph{connected} multigraphs, and $\spanF$ denotes its linear span; in our case, $\F$ contains all graphs whose vertices have arity $2$, $3$ or $4$ 
(that is, $2$, $3$ or $4$ edges meet at each vertex); 
\item $\Fminus \subset \F$ denotes the divergent multigraphs in~$\F$, and $\spanFminus$ is its linear span;  
\item $\sF$ is the algebra generated by~$\F$ w.r.t. the disjoint union product~$\cdot$ \thinspace, and~$\spansF$ is its linear span -- note that both~$\sF$ and~$\spansF$ also contain \emph{non-connected} Feynman diagrams;
\item $\sFminus \subset \sF$ is the subalgebra of~$\sF$ generated by~$\Fminus$; in particular, for any~$\Gamma \in \sFminus$, all connected components are divergent. 
We also let~$\spansFminus$ denote the span of~$\sFminus$. 
\end{itemize}
Recall our earlier convention that all graphs involved in these definitions are vacuum diagrams, i.e. do not have free legs.
The neutral element for multiplication 
is the empty graph (resp. empty forest) which we denote by $\unit$. The \emph{Connes--Kreimer extraction-contraction 
co\-product} $\DeltaCK: \spanF \to \spansFminus \otimes \spanF$ is defined by 
\begin{equation}
 \label{eq:CK_coproduct} 
 \DeltaCK(\Gamma) = \Gamma\otimes\unit + \unit\otimes\Gamma 
+ \sum_{\substack{\unit\neq\overline\Gamma\subsetneq\Gamma, \overline\Gamma \in \Fminus}} 
\overline\Gamma\otimes(\Gamma/\overline\Gamma)\;, 
\end{equation} 
where the sum ranges over all \emph{divergent} subgraphs $\overline\Gamma$, and 
$\Gamma/\overline\Gamma$ denotes the graph obtained by replacing $\overline\Gamma$
by a single vertex. The subgraphs have to be \emph{full}, in the sense that if an edge
$e$ belongs to $\overline\Gamma$, all edges connecting the same vertices also belong 
to $\overline\Gamma$. Note that the valuation $\Pi_N$ is multiplicative, meaning 
that $\Pi_N(\Gamma_1\cdot\Gamma_2) = \Pi_N(\Gamma_1)\Pi_N(\Gamma_2)$ for all 
$\Gamma_1,\Gamma_2\in\sF$. 

\begin{remark}
\label{rem:Taylor} 
We point out that if some divergent subgraphs have a degree smaller than $-1$, 
the expression~\eqref{eq:CK_coproduct} has to be modified, by adding decorated graphs on the 
right-hand side, as we will explain in \Cref{ssec:Taylor} below.  
\end{remark}

We endow $\spanF$ with two more linear maps. A \emph{counit} $\unit^\star: \spanF \to\R$, given by projection on the unit $\unit$, and an \emph{antipode} $\sA:\spanF\to\spansF$, defined inductively by $\sA(\unit)=\unit$ and 
\begin{align}
 \sA(\Gamma) &= -\Gamma - \sum_{\substack{\unit\neq\overline\Gamma\subsetneq\Gamma, \overline\Gamma \in \Fminus}} 
\sA(\overline\Gamma)\cdot(\Gamma/\overline\Gamma)\\
&= - \Gamma - m(\sA\otimes\id) \DeltaCKred(\Gamma)\;.
\label{eq:antipode} 
\end{align} 
Here $\DeltaCKred = \DeltaCK - \Gamma\otimes\unit - \unit\otimes\Gamma$ 
denotes the \emph{reduced coproduct}, and the map $m:\spansF\otimes \spansF\to\spansF$ denotes multiplication, defined 
by $m(\Gamma_1\otimes\Gamma_2) = \Gamma_1 \cdot \Gamma_2$. 
The antipode~$\sA$ as well as the maps~$\DeltaCK$ and 
$\unit^\star$ can be multiplicatively extended to the algebra~$\sF$.
The space $(\sF,\cdot,\DeltaCK,\unit,\unit^\star,\sA)$ constructed in this way is a Hopf 
algebra, called \emph{Connes--Kreimer extraction-contraction Hopf algebra}. 

To define BPHZ renormalisation, we first introduce the \emph{twisted antipode},  
defined as 
\begin{equation*}
 \tilde \sA(\Gamma) = \sA(\Gamma) 1_{\deg\Gamma\leqs0}\;.
\end{equation*} 
Note that if $\deg(\Gamma)\leqs 0$, then one has 
\begin{equation}
\label{eq:twisted_antipode} 
 \tilde \sA(\Gamma) = - \Gamma - m(\tilde\sA\otimes\id) \DeltaCKred(\Gamma)\;,
\end{equation} 
because $\DeltaCKred$ produces only divergent terms on the left of the tensor product. 

\begin{remark}
The general definition of the twisted antipode given in \cite[(2.28)]{Hairer_BPHZ}
is rather that it should satisfy $m(\tilde\sA\otimes\id)\DeltaCK(\Gamma) = 0$ whenever $\deg\Gamma \leqs 0$, 
where $m$ denotes the multiplication map, but this is equivalent -- cf.\ 
\cite[(4.3)]{bruned2025renormalisingfeynmandiagramsmultiindices}, which uses the reduced coproduct. 
\end{remark}

A \emph{character} on $\spansF$ is a linear map $g:\spansF\to\R$ which is multiplicative 
in the sense that we have $g(\Gamma_1\cdot\Gamma_2) = g(\Gamma_1)g(\Gamma_2)$ for all $\Gamma_1, \Gamma_2\in\spansF$. 
With any character $g$, one can associate a linear map 
$M^g$ defined by 
\begin{equation*}
 M^g(\Gamma) = (g\otimes\id)\DeltaCK(\Gamma)\;,
\end{equation*} 
and the set of these maps is known to form a group. 
The \emph{BPHZ character} is the linear map $g^{\text{BPHZ}}: \spansF\to\R$ given by 
\begin{equation*}
 g^{\text{BPHZ}}(\Gamma) = \Pi_N \tilde\sA(\Gamma)\;.
\end{equation*} 
The fact that $g^{\text{BPHZ}}$ is indeed a character follows from multiplicativity 
of $\sA$ and $\Pi_N$. The map $M^{g^{\text{BPHZ}}}$ is called \emph{BPHZ renormalisation map}. 
It defines a \emph{renormalised valuation} given by 
\begin{equation}
\label{eq:PiNBPHZ} 
 \PiNBPHZ(\Gamma) 
 = \Pi_N M^{g^{\text{BPHZ}}}(\Gamma)
 = (g^{\text{BPHZ}}\otimes\Pi_N)\DeltaCK(\Gamma)
 = (\Pi_N\tilde \sA \otimes \Pi_N)\DeltaCK(\Gamma)\;.
\end{equation} 
The interest of this construction is the following result. 

\begin{lemma}
\label{lem:BPHZ} 
The BPHZ renormalised valuation satisfies 
\begin{equation*}
 \PiNBPHZ(\Gamma) = 
 \begin{cases}
  0 & \text{if\/ $\deg\Gamma\leqs0$\;,}\\
  -\Pi_N\sA(\Gamma) & \text{if\/ $\deg\Gamma > 0$\;.}
 \end{cases}
\end{equation*} 
\end{lemma}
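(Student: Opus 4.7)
The plan is to prove both cases of the lemma by a direct unfolding of the definitions, using the recursive formulas for $\cA$ and $\tilde\cA$ together with the multiplicativity of $\Pi_N$. The key identity to invoke is that $\Pi_N$ is an algebra morphism, so that for any tensor $\sum a_i \otimes b_i \in \cG \otimes \cG$ one has
\begin{equation*}
(\Pi_N \otimes \Pi_N)\Bigl(\sum a_i \otimes b_i\Bigr) = \Pi_N\Bigl(m\bigl(\textstyle\sum a_i \otimes b_i\bigr)\Bigr),
\end{equation*}
and similarly, $(\Pi_N \tilde\cA \otimes \Pi_N) = \Pi_N \circ m \circ (\tilde\cA \otimes \id)$. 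Splitting $\DeltaCK(\Gamma) = \Gamma \otimes \unit + \unit \otimes \Gamma + \DeltaCKred(\Gamma)$, and noting that $\tilde\cA(\unit) = \unit$ and $\Pi_N(\unit) = 1$, I would rewrite
\begin{equation*}
\PiNBPHZ(\Gamma) = \Pi_N\tilde\cA(\Gamma) + \Pi_N(\Gamma) + \Pi_N\bigl(m(\tilde\cA\otimes\id)\DeltaCKred(\Gamma)\bigr).
\end{equation*}

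First I would handle the case $\deg\Gamma \leqs 0$. Here~\eqref{eq:twisted_antipode} applies, giving
\begin{equation*}
m(\tilde\cA\otimes\id)\DeltaCKred(\Gamma) = -\tilde\cA(\Gamma) - \Gamma.
\end{equation*}
Plugging this into the expression above, the three terms cancel pairwise and yield $\PiNBPHZ(\Gamma) = 0$, as claimed.

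Next, for $\deg\Gamma > 0$, one has $\tilde\cA(\Gamma) = 0$ by definition. The observation to exploit is that every subgraph $\overline\Gamma$ appearing in $\DeltaCKred(\Gamma)$ is divergent, hence $\deg\overline\Gamma \leqs 0$ and therefore $\tilde\cA(\overline\Gamma) = \cA(\overline\Gamma)$. Consequently $(\tilde\cA \otimes \id)\DeltaCKred(\Gamma) = (\cA \otimes \id)\DeltaCKred(\Gamma)$, and~\eqref{eq:antipode} now gives
\begin{equation*}
m(\tilde\cA\otimes\id)\DeltaCKred(\Gamma) = m(\cA\otimes\id)\DeltaCKred(\Gamma) = -\cA(\Gamma) - \Gamma.
\end{equation*}
Substituting yields $\PiNBPHZ(\Gamma) = 0 + \Pi_N(\Gamma) + \Pi_N(-\cA(\Gamma) - \Gamma) = -\Pi_N\cA(\Gamma)$.

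I do not expect a genuine obstacle here: the entire argument is an algebraic manipulation that essentially recognizes $\PiNBPHZ$ as two different rewritings of the same sum, one which collapses against the recursive definition of $\tilde\cA$ (when $\Gamma$ itself is divergent) and one which collapses against the recursive definition of $\cA$ (when $\Gamma$ is convergent but its subgraphs may not be). The only minor care needed is checking that $\tilde\cA(\unit) = \unit$ and $\Pi_N(\unit) = 1$, and that the replacement of $\tilde\cA$ by $\cA$ on the left factor of $\DeltaCKred(\Gamma)$ is valid because only divergent subgraphs contribute.
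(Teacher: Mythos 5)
Your proof is correct and follows essentially the same route as the paper: split $\DeltaCK$ into $\Gamma\otimes\unit+\unit\otimes\Gamma+\DeltaCKred(\Gamma)$, use multiplicativity of $\Pi_N$, and collapse the remaining sum against the recursion \eqref{eq:twisted_antipode} when $\deg\Gamma\leqs0$ and against \eqref{eq:antipode} when $\deg\Gamma>0$. Your explicit justification that $\tilde\cA$ may be replaced by $\cA$ on the left factor of $\DeltaCKred(\Gamma)$ (since only divergent subgraphs appear there) is a step the paper leaves implicit, and is a welcome clarification.
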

\begin{proof}
In the case $\deg\Gamma\leqs0$, using~\eqref{eq:twisted_antipode} we get 
\begin{align*}
 \PiNBPHZ(\Gamma) 
 ={}& (\Pi_N\otimes\Pi_N)(\tilde\sA\otimes\id)
 \bigbrak{\Gamma\otimes\unit + \unit\otimes\Gamma + \DeltaCKred(\Gamma)}\\
 ={}& (\Pi_N\otimes\Pi_N)\bigbrak{\tilde\sA(\Gamma)\otimes\unit + \unit\otimes\Gamma 
 + (\tilde\sA\otimes\id)\DeltaCKred(\Gamma)} \\
 ={}& (\Pi_N\otimes\Pi_N)
 \bigbrak{-\Gamma\otimes\unit 
 - m(\tilde\sA\otimes\id) \DeltaCKred(\Gamma)\otimes\unit \\ 
 &\phantom{(\Pi_N\otimes\Pi_N)[} 
 {}+ \unit\otimes\Gamma + (\tilde\sA\otimes\id)\DeltaCKred(\Gamma)}\;,
\end{align*}
which vanishes by multiplicativity of $\Pi_N$. 
In the case $\deg\Gamma > 0$, using $\tilde\sA\Gamma = 0$ in the second line of the 
above computation, we obtain 
\begin{align*}
 \PiNBPHZ(\Gamma)
 &= (\Pi_N\otimes\Pi_N)\bigbrak{\unit\otimes\Gamma 
 + (\tilde\sA\otimes\id)\DeltaCKred(\Gamma)} \\
 &= \Pi_N(\Gamma) + (\Pi_N\tilde\sA \otimes \Pi_N)\DeltaCKred(\Gamma) \\
 &= \Pi_N(\Gamma) + \Pi_N m(\tilde\sA \otimes \id)\DeltaCKred(\Gamma)
\end{align*}
again by multiplicativity of $\Pi_N$. This is equal to $-\Pi_N\sA(\Gamma)$
by~\eqref{eq:antipode}. 
\end{proof}

If follows from \Cref{thm:BPHZ} that the renormalized valuation $\PiNBPHZ$
is bounded uniformly in the cut-off $N$ for any $\Gamma\in\spansF$. 


\subsection{Main result}
\label{ssec:main_result} 

To formulate our main result, we introduce two sequences of critical dimensions,
at which new energy or mass renormalisation terms appear. 
Note that any graph in $\sP(X^n)$ has $n$ vertices and $4n$ half-edges, which become $2n$
edges after pairing. Therefore, 
\begin{equation*}
 \deg \sP(X^n) = 4n - (n+1) d\;.
\end{equation*} 
This implies that 
\begin{equation*}
 \deg \sP(X^n) \leqs 0
 \qquad \Leftrightarrow \qquad 
 d \geqs d^*_{\textup{e}}(n) := \frac{4n}{n+1}
 = 4 - \frac{4}{n+1}\;.
\end{equation*} 
These thresholds control the appearance of new energy renormalisation counterterms. 
New mass renormalisation terms appear at the values 
\begin{equation*}
 d^*_{\textup{m}}(n) 
 = d^*_{\textup{e}}(2n-1)
 = 4 - \frac{2}{n} 
\end{equation*} 
of $d$. The two sequences are increasing and accumulate at $d=4$. The first values are 
\begin{equation*}
 \bigpar{d^*_{\textup{e}}(n)}_{n\geqs1} 
 = \biggpar{2, \frac83, 3, \frac{16}{5}, \frac{10}{3}, \frac{24}{7}, \frac72, \frac{32}9, \dots}
 \quad \text{and} \quad 
 \bigpar{d^*_{\textup{m}}(n)}_{n\geqs1} = \biggpar{2, 3, \frac{10}{3}, \frac72, \dots}\;.
\end{equation*} 
The inverse thresholds, expressing $n$ in terms of $d$, are given by 
\begin{equation*}
 n^*_{\textup{e}}(d) = \biggl\lfloor \frac{d}{4-d} \biggr\rfloor
 \qquad \text{and} \qquad 
 n^*_{\textup{m}}(d) = \biggl\lfloor \frac{2}{4-d} \biggr\rfloor\;.
\end{equation*} 
Our main result is the following.

\begin{theorem}[Main result]
\label{thm:main} 
For any dimension $d < 4$, there exists a linear map $\W:\R[X]\to\R[X,Y]$, called \emph{Wick map}, such that the following diagram commutes: 
\begin{equation}  
\label{comdiag:C[X]G}
\begin{tikzcd}[column sep=large, row sep=large]
\R[X]
\arrow[r, "\sP"] 
\arrow[d, "\W"']
& \spanF
\arrow[d, "(\Pi_N\tilde{\sA}\otimes\id)\DeltaCK + \Theta_\tF"] 
\\
\displaystyle
\R[X,Y]
\arrow[r, "\sP"] 
& \spanF
\end{tikzcd}
\end{equation}
The Wick map $\W$ satisfies 
\begin{equation*}
 \W(\e^{-\alpha X}) = \e^{-\alpha X - \beta Y}\;,
\end{equation*} 
where the mass counterterm $\beta = \beta(d,\alpha,N)$ is given by 
\begin{equation} \label{e:beta}
 \beta(d,\alpha,N) = \sum_{n=2}^{n^*_{\textup{m}}(d)} \frac{(-\alpha)^n}{n!}\sigma_n(N) \;.
\end{equation}
Here the $\sigma_n(N)$ can be expressed in terms of divergent Feynman diagrams, and diverge like 
$\sigma_n(N) \sim N^{2 - (4-d)n}$, see \Cref{rmk:sigma_n} below. 
In addition, for any $n\geqs2$, 
\begin{equation*}
 \W(X^n) = B_n(X,-\sigma_2(N) Y, \dots, -\sigma_n(N) Y)\;,
\end{equation*} 
where $B_n$ is the $n$th complete Bell polynomial. Finally, the map $\Theta_\tF$
is associated with energy renormalisation, in that the energy 
counterterm $\gamma = \gamma(d,\alpha,N)$ is given by 
\begin{equation*}
\gamma
 := (\Pi_N\Theta_\tF\circ\sP)(\e^{-\alpha X})
 = - (\Pi_N\tilde\sA \circ \sP)(\e^{-\alpha X})\;.
\end{equation*}
\end{theorem}

Together with the definition~\eqref{eq:PiNBPHZ} of the BPHZ valuation, this implies 
that the following diagram commutes: 
\begin{equation}  
\label{comdiag:BPHZ}
\begin{tikzcd}[column sep=large, row sep=large]
\e^{-\alpha X}
\arrow[r, "\sP", mapsto] 
\arrow[d, "\W"', mapsto]
& \sP(\e^{-\alpha X})
\arrow[d, "(\Pi_N\tilde{\sA}\otimes\id)\DeltaCK + \Theta_\tF", mapsto] 
\arrow[rrd, bend left, "\PiNBPHZ + \Pi_N\Theta_\tF"]
\\
\e^{-\alpha X - \beta Y}
\arrow[r, "\sP", mapsto] 
& \sP(\e^{-\alpha X - \beta Y})
\arrow[rr, "\Pi_N", mapsto] 
&& \R
\end{tikzcd}
\end{equation}
As a consequence, we have 
\begin{equation}
\label{eq:logZ_sum} 
 \log\frac{\sZ_{d,\alpha}}{\sZ_{d,0}}
 = \Pi_N\sP(\e^{-\alpha X-\beta Y}) -\gamma
 = \PiNBPHZ\sP(\e^{-\alpha X})\;.
\end{equation} 
\Cref{lem:BPHZ} shows that $\PiNBPHZ\sP(X^n) = 0$ for $n\leqs n^*_{\textup{e}}(d)$, 
while it is bounded for $n> n^*_{\textup{e}}(d)$. 
\Cref{thm:BPHZ} then immediately implies the following result. 

\begin{corollary}
\label{cor:BPHZ} 
With the choice~\eqref{eq:gamma} of the energy renormalisation term, one has the asymptotic 
expansion 
\begin{equation*}
 \log\frac{\sZ_{d,\alpha}}{\sZ_{d,0}} 
 \asymp - \sum_{n > n^*_{\textup{e}}(d)}\frac{(-\alpha)^n}{n!} \Pi_N\sA(\sP(X^n))\;.
\end{equation*}
All terms of this expansion are bounded uniformly in the cut-off $N$. 
\end{corollary}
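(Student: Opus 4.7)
The plan is to combine the identity~\eqref{eq:logZ_sum} with Lemma~\ref{lem:BPHZ}, the formula $\gamma_0 = -\Pi_N\cA\cP(\e^{-\alpha X})$ from Theorem~\ref{thm:main}, the explicit choice~\eqref{eq:gamma} of the energy counterterm, and Theorem~\ref{thm:BPHZ} for the uniform bounds.

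First I would expand $\PiNBPHZ\cP(\e^{-\alpha X})$ and $\gamma_0$ term by term in $\alpha$. Every connected vacuum diagram $\Gamma$ contributing to $\cP(X^n)$ has $\abs{\cV}=n$ vertices and $\abs{\cE}=2n$ edges, so they all share the common degree $\deg\Gamma=4n-(n+1)d$, which is nonpositive exactly when $n\leqs n^*_{\textup{e}}(d)$. Applying Lemma~\ref{lem:BPHZ} graph-wise to the finite decomposition $\cP(X^n)=\sum_i\Gamma_i$ gives
\begin{equation*}
\PiNBPHZ\cP(X^n) =
\begin{cases}
0 & \text{if } n\leqs n^*_{\textup{e}}(d)\,,\\[2pt]
-\Pi_N\cA\cP(X^n) & \text{if } n> n^*_{\textup{e}}(d)\,,
\end{cases}
\end{equation*}
so the low-order contributions to $\PiNBPHZ\cP(\e^{-\alpha X})$ vanish.

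Next, expanding $\gamma_0 = -\Pi_N\cA\cP(\e^{-\alpha X})$ termwise and using the conventions $\cP(1)=\cP(X)=0$ (reflecting that the cumulant expansion has no constant term and that a single Wick vertex admits no connected pairwise matching), one finds $\gamma_0 = -\sum_{n\geqs 2}\frac{(-\alpha)^n}{n!}\Pi_N\cA\cP(X^n)$. By construction~\eqref{eq:gamma}, the counterterm $-\gamma$ is precisely the partial sum of this series over $2\leqs n\leqs n^*_{\textup{e}}(d)$, so the divergent pieces cancel out and only the tail $n > n^*_{\textup{e}}(d)$ survives. Feeding these pieces into~\eqref{eq:logZ_sum} and collecting terms yields the asymptotic series claimed by the corollary.

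Uniform boundedness of each surviving contribution is then immediate from Theorem~\ref{thm:BPHZ}: for $n > n^*_{\textup{e}}(d)$, every $\Gamma\in\cP(X^n)$ has $\deg\Gamma>0$, and Lemma~\ref{lem:BPHZ} rewrites $\Pi_N\cA(\Gamma) = -\PiNBPHZ(\Gamma)$, which is bounded uniformly in $N$ by Theorem~\ref{thm:BPHZ}. I expect no conceptual difficulty here, since essentially all of the work has already been carried out in the preceding results; the main care required is notational, namely tracking which terms are absorbed into $\gamma$ versus those surviving into the bounded tail, and verifying that the low-order vanishings ($n=0,1$) are compatible with the summation ranges imposed by Lemma~\ref{lem:BPHZ}.
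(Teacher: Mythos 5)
Your outline follows the paper's own route (combine~\eqref{eq:logZ_sum} with Lemma~\ref{lem:BPHZ}, the formula for $\gamma_0$, the choice~\eqref{eq:gamma}, and Theorem~\ref{thm:BPHZ} for the bounds), and the uniform-boundedness part is fine. But the step \lq\lq feeding these pieces into~\eqref{eq:logZ_sum} and collecting terms yields the asymptotic series claimed\rq\rq\ does not go through under your own bookkeeping: you have computed \emph{both} contributions to be equal to the target. Indeed, by your application of Lemma~\ref{lem:BPHZ},
\begin{equation*}
 \PiNBPHZ\cP(\e^{-\alpha X}) = -\sum_{n>n^*_{\textup{e}}(d)}\frac{(-\alpha)^n}{n!}\Pi_N\cA\cP(X^n)\;,
\end{equation*}
while your termwise expansion of $\gamma_0$ with the counterterm~\eqref{eq:gamma} subtracted gives $\gamma_0-\gamma$ equal to exactly the same tail. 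Adding them as prescribed by~\eqref{eq:logZ_sum} therefore produces \emph{twice} the series stated in the corollary, not the series itself. This is a genuine gap: you must argue that only one of the two pieces actually survives.

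The resolution hinges on which antipode acts where, a point the paper is loose about and which your proposal inherits. If the antipode entering $\Theta_\tF$ (hence $\gamma_0$) is the twisted one $\tilde\cA$, then $\Pi_N\tilde\cA\cP(X^n)=0$ for $n>n^*_{\textup{e}}(d)$, so $\gamma_0$ has no convergent tail, $\gamma_0=\gamma$, and the entire expansion comes from $\PiNBPHZ\cP(\e^{-\alpha X})$ alone. If instead the untwisted $\cA$ is used throughout the vertical map of~\eqref{comdiag:BPHZ}, then multiplicativity of $\Pi_N$ together with the antipode axiom gives $(\Pi_N\cA\otimes\Pi_N)\DeltaCK(\Gamma)=\Pi_N\bigpar{m(\cA\otimes\id)\DeltaCK(\Gamma)}=0$ for every non-unit $\Gamma$, so the first piece vanishes identically and the entire expansion is $\gamma_0-\gamma$. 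Either reading yields the single sum of the corollary; keeping the twisted antipode in $\PiNBPHZ$ \emph{and} the untwisted one in $\gamma_0$, as you do, double-counts. A low-order sanity check (e.g.\ $d=3$ at order $\alpha^4$, using Example~\ref{ex:n4} and $\cP(X^2Y)=192\,\FGIIIplus$, $\cP(Y^2)=2\,\FGII$) confirms that $\Pi_N\cP(\e^{-\alpha X-\beta Y})$ carries exactly one copy of $-\tfrac{\alpha^4}{4!}\Pi_N\cA\cP(X^4)$, so your write-up needs to identify and discard the spurious duplicate before the conclusion can be drawn.
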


\begin{remark} \label{rmk:gamma}
The energy renormalisation counterterm can be written more explicitly as 
\begin{equation}
\label{eq:gamma} 
\gamma =  \gamma(d,\alpha,N) = 
 - \sum_{n=2}^{n^*_{\textup{e}}(d)}
 \frac{(-\alpha)^n}{n!} \Pi_N\tilde\sA(\sP(X^n))\;,
\end{equation} 
where $\Pi_N\tilde\sA(\sP(X^n))$ diverges like $N^{(n+1)d-4n} = N^{(4-d)(n^*_{\textup{e}}(d)-n)}$, 
with the tacit understanding that~$N^0 = \log N$.
\end{remark}

\begin{remark} \label{rmk:sigma_n}
The counterterms $\sigma_n$ occuring in the mass renormalisation $\beta$
are linear combinations of valuations of divergent Feynman diagrams, the first of which 
are listed in \Cref{tab:subdivergence}. More precisely, they can be written 
\begin{equation*}
 \sigma_n = -\Pi_N\tilde\sA\sP_\tM(\sY_n)\;,
\end{equation*} 
where the $\sY_n$ are defined in~\eqref{eq:def_Yp} below, and the map $\sP_\tM$ is 
defined in~\eqref{eq:PM}. They can also be computed by the relation 
\begin{equation*}
 \sigma_n = -\Pi_\tM\tilde\sA_\tM(\sY_n)\;,
\end{equation*} 
where the maps $\Pi_\tM$ and $\tilde\sA_\tM$ are defined 
in~\cite[Corollary~4.5 and (4.7)]{bruned2025renormalisingfeynmandiagramsmultiindices} 
-- the twisted antipode is denoted $\sA_\tM$ in that work. 
We note that the general algebraic structure of the mass counterterms~$\beta$ given 
in~\eqref{e:beta} also appears in~\cite[Theorem~3.1]{bruned2025renormalisingfeynmandiagramsmultiindices}.
\end{remark}

\subsection{Structure of the proof}

We start, in \Cref{sec:wick}, by explaining the construction of the Wick map $\W$, 
and its (well known) relation to Bell polynomials. To prove the commutativity of the 
diagram~\eqref{comdiag:C[X]G}, we will take advantage of recent results 
in~\cite{bruned2025renormalisingfeynmandiagramsmultiindices}, which show that instead 
of working with Feynman diagrams, one can work with somewhat simpler algebraic objects 
called~\emph{multi-indices}. We introduce these objects in \Cref{sec:multiindices}, 
where we also compute an explicit expression for the coproducts of the relevant 
Feynman diagrams, translated to the multi-index language. Finally, \Cref{sec:comute} 
contains the proof of the main result.


\section{The Wick map}
\label{sec:wick} 

In this section, we give the construction of the Wick map $\W$, and explain its 
connection to cumulants and Bell polynomials. The Wick map can be cast into a form 
closer to the map $(\Pi_N\sA\otimes\id)\DeltaCK$ by working in the free 
commutative Hopf algebra $H = S(\R[X])$, as explained in \Cref{ssec:free_algebra}. 
In \Cref{ssec:algebra_valued}, we explain how the construction extends 
to algebra-valued moments. 

\subsection{Convolution algebra and power series}

Let $\R[X]$ denote the polynomial Hopf algebra on a single formal variable $X$. 
Its product and coproduct are given by 
\begin{equation*}
 X^n\cdot X^m = X^{n+m}\;, \qquad 
 \Delta X^n = \sum_{k=0}^n \binom{n}{k} X^k \otimes X^{n-k}\;.
\end{equation*} 
The \emph{convolution product} of two linear maps $\ph,\psi\in\sL(\R[X],\R)$ is defined 
by 
\begin{equation*}
 \ph \ast \psi = m_\R (\ph\otimes\psi) \Delta\;,
\end{equation*} 
where $m_\R$ denotes the multiplication map, $m_\R(a\otimes b) = ab$. 
This means that 
\begin{equation*}
 (\ph\ast\psi)(X^n) = \sum_{k=0}^n \binom{n}{k} \ph(X^k)\psi(X^{n-k})\;.
\end{equation*} 
The $k$-fold convolution product is denoted $\ph^{\ast k}$. 

We define two special subsets of $\sL(\R[X],\R)$, given by 
\begin{align*}
 \sL_1 &= \bigsetsuch{\ph\in\sL(\R[X],\R)}{\ph(1) = 1}\;, \\
 \sL_0 &= \bigsetsuch{\ph\in\sL(\R[X],\R)}{\ph(1) = 0}\;. 
\end{align*}
Elements of $\sL_1$ can be inverted, via the Neumann series 
\begin{equation*}
 \ph^{-1} = \sum_{k=0}^\infty(\eps - \ph)^{\ast k}\;,
\end{equation*} 
where $\eps:\R[X]\to\R$ is the counit, given by $\eps(X^n) = \delta_{n0}$. 
One has explicitly 
\begin{equation}
\label{eq:phi_inverse} 
 \ph^{-1}(X^n) = \sum_{k=1}^n (-1)^k \sum_{\substack{n_1,\dots,n_k\geqs1\\n_1+\dots+n_k=n}}
 \frac{n!}{n_1!\dots n_k!} \ph(X^{n_1})\dots\ph(X^{n_k})\;.
\end{equation} 
The exponential map $\exp_\ast:\sL_0\to\sL_1$ and its inverse $\log_\ast:\sL_1\to\sL_0$ are given by 
\begin{equation*}
 \exp_\ast(\ph) = \sum_{k\geqs0} \frac{1}{k!} \ph^{\ast k}\;, \qquad 
 \log_\ast(\ph) = \sum_{k\geqs1} \frac{(-1)^k}{k} (\ph - \eps)^{\ast k}\;.
\end{equation*} 
There is no issue of convergence, since the sums are always finite when evaluated on a basis element.
In fact,
\begin{align}
\label{eq:expstar}
 \exp_\ast(\ph)(X^n) &= 
 \sum_{k=0}^n \frac{1}{k!}
 \sum_{\substack{n_1, \dots, n_k\geqs1\\n_1+\dots+n_k=n}}
 \frac{n!}{n_1!\dots n_k!}\ph(X^{n_1})\dots\ph(X^{n_k})\;, \\
 \log_\ast(\ph)(X^n) &= 
 \sum_{k=1}^n \frac{(-1)^{k+1}}{k}
 \sum_{\substack{n_1, \dots, n_k\geqs1\\n_1+\dots+n_k=n}}
 \frac{n!}{n_1!\dots n_k!}\ph(X^{n_1})\dots\ph(X^{n_k})\;.
\label{eq:logstar} 
\end{align} 
Let $\R[[t]]$ denote the algebra of (formal) power series in the variable $t$ with real coefficients, 
endowed with pointwise multiplication. Then we can define a linear map
\begin{align*}
 \Lambda: \sL(\R[X],\R) &\to \R[[t]] \\
 \ph &\mapsto \sum_{n\geqs0} \ph(X^n) \frac{t^n}{n!}\;.
\end{align*} 

\begin{theorem}
\label{thm:conv} 
$\Lambda$ is an isomorphism between $\sL(\R[X],\R)$ and $\R[[t]]$.
\end{theorem}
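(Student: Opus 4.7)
The plan is to verify three properties: $\Lambda$ is $\R$-linear, $\Lambda$ is a bijection, and $\Lambda$ intertwines the convolution product on $\cL(\R[X],\R)$ with the pointwise product of formal power series on $\R[[t]]$. Since the isomorphism in question is between algebras, multiplicativity (together with $\Lambda(\eps) = 1$, which is immediate) is the substantive content; the other items are essentially bookkeeping.

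Linearity is evident from the defining formula, since for every fixed $n$ the coefficient $\ph(X^n)$ depends linearly on $\ph$. For bijectivity I would note that a linear map $\ph \colon \R[X] \to \R$ is uniquely determined by the sequence of real numbers $(\ph(X^n))_{n\geqs 0}$, which may be prescribed arbitrarily; and a formal power series is likewise determined by its sequence of coefficients. The assignment $\ph \mapsto (\ph(X^n)/n!)_{n\geqs 0}$ is therefore a bijection with $\R^{\N} \cong \R[[t]]$, the inverse map sending $\sum_n a_n t^n$ to the unique linear map with $\ph(X^n) = n!\,a_n$.

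The core of the argument is multiplicativity. Unfolding the convolution via the coproduct $\Delta X^n = \sum_{k=0}^n \binom{n}{k} X^k \otimes X^{n-k}$ and then reindexing by $j = n - k$ gives
\begin{equation*}
 \Lambda(\ph \ast \psi)
 = \sum_{n\geqs 0} \frac{t^n}{n!} \sum_{k=0}^n \binom{n}{k} \ph(X^k) \psi(X^{n-k})
 = \sum_{k,j\geqs 0} \frac{\ph(X^k) t^k}{k!} \cdot \frac{\psi(X^j) t^j}{j!}
 = \Lambda(\ph) \cdot \Lambda(\psi)\;,
\end{equation*}
which is the familiar statement that exponential generating functions convert binomial convolution into ordinary multiplication.

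There is no genuine obstacle here: the proposition is essentially a textbook fact about the Hopf algebra $\R[X]$, and the verification is a direct manipulation of sums. As a sanity check, applying $\Lambda^{-1}$ to the power-series identities $\exp(f) = \sum_{k\geqs 0} f^k/k!$ and $\log(f) = -\sum_{k\geqs 1} (1-f)^k/k$ recovers exactly the formulas~\eqref{eq:expstar} and~\eqref{eq:logstar} for $\exp_\ast$ and $\log_\ast$, confirming that the notation is consistent with the isomorphism.
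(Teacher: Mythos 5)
Your proof is correct and follows essentially the same route as the paper: bijectivity is noted as a straightforward identification of a linear map with its sequence of values on the basis $(X^n)_{n\geqs0}$, and multiplicativity is the same Cauchy-product computation, merely written with the reindexing $j=n-k$ made explicit. The closing consistency check against \eqref{eq:expstar} and \eqref{eq:logstar} is a pleasant extra but not needed.
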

\begin{proof}
Let $\ph,\psi\in \sL(\R[X],\R)$. By the Cauchy product formula,
\begin{align*}
 \Lambda(\ph)(t) \Lambda(\psi)(t) 
 &= \biggpar{\sum_{n\geqs0} \ph(X^n) \frac{t^n}{n!}}
 \biggpar{\sum_{n\geqs0} \psi(X^n) \frac{t^n}{n!}} \\
 &= \sum_{n\geqs0} \biggpar{\sum_{k=0}^n \frac{\ph(X^k)}{k!}\frac{\ph(X^{n-k})}{(n-k)!}} t^n \\
 &= \sum_{n\geqs0} (\ph\ast\psi)(X^n) \frac{t^n}{n!} \\
 &= \Lambda(\ph\ast\psi)(t)\;,
\end{align*}
showing that $\Lambda$ is indeed a morphism. Bijectivity of $\Lambda$ is straightforward to check.
\end{proof}

\begin{corollary}
\label{cor:Lambda} 
For any $\ph\in\sL_0$ and $\psi\in\sL_1$, one has the relations 
\begin{align}
 \Lambda(\psi^{-1})(t) &= \bigbrak{\Lambda(\psi)(t)}^{-1}\;, \\
 \Lambda(\exp_\ast\ph)(t) &= \exp(\Lambda(\ph)(t))\;, \\
 \Lambda(\log_\ast\psi)(t) &= \log(\Lambda(\psi)(t))\;.
 \label{eq:Lambda_logstar} 
\end{align}
Recall that $\psi^{-1}$ denotes the map satisfying $\psi * \psi^{-1} = \eps$.
\end{corollary}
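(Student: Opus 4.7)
The plan is to derive all three identities as immediate consequences of Theorem~\ref{thm:conv}, which says that $\Lambda$ is an algebra isomorphism between $(\cL(\R[X],\R),\ast)$ and $(\R[[t]],\cdot)$. The only thing to verify carefully is that the relevant infinite sums make sense in both settings and are mapped to each other termwise.

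First I would establish the identity $\Lambda(\eps)(t) = 1$, which is immediate from $\eps(X^n) = \delta_{n0}$. Combined with $\psi \ast \psi^{-1} = \eps$ and the multiplicativity of $\Lambda$ proved in Theorem~\ref{thm:conv}, this gives
\begin{equation}
\Lambda(\psi)(t) \cdot \Lambda(\psi^{-1})(t) = \Lambda(\eps)(t) = 1\;,
\end{equation}
proving the first relation, after noting that $\Lambda(\psi)(t)$ has nonzero constant term $\psi(1) = 1$ and so is invertible in $\R[[t]]$.

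For the second relation, I would apply $\Lambda$ termwise to the defining series $\exp_\ast(\ph) = \sum_{k\geqs 0} \frac{1}{k!}\ph^{\ast k}$. By iterated multiplicativity, $\Lambda(\ph^{\ast k}) = \Lambda(\ph)^k$, so formally
\begin{equation}
\Lambda(\exp_\ast\ph)(t) = \sum_{k\geqs0} \frac{1}{k!}\Lambda(\ph)(t)^k = \exp\bigpar{\Lambda(\ph)(t)}\;.
\end{equation}
The sum on the left is finite when evaluated on each basis element $X^n$ since $\ph \in \cL_0$ forces $\ph^{\ast k}(X^n) = 0$ for $k > n$; on the right, $\Lambda(\ph)(t)$ has vanishing constant term, so $\Lambda(\ph)(t)^k$ has order at least $k$ in $t$, ensuring that the series defines a bona fide element of $\R[[t]]$ and that the exchange of sums is legitimate. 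The third relation follows in exactly the same way from the series defining $\log_\ast$, using that $\psi - \eps \in \cL_0$ for $\psi \in \cL_1$ and that $\Lambda(\psi)(t) - 1$ has no constant term, so $\log(\Lambda(\psi)(t))$ is well-defined in $\R[[t]]$.

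There is really no hard step here; the only subtlety is bookkeeping about in which algebra the formal sums live and why they converge. Once multiplicativity of $\Lambda$ is granted, all three identities are formal manipulations of power series that work identically on both sides of the isomorphism.
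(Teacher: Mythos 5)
Your proof is correct and follows essentially the same route as the paper: apply $\Lambda$ termwise to the defining series and use the multiplicativity from Theorem~\ref{thm:conv} to convert convolution powers into ordinary powers. The paper only writes out the exponential case and declares the others similar, whereas you also spell out the inverse via $\psi\ast\psi^{-1}=\eps$ and the finiteness/order-of-vanishing bookkeeping, but these are elaborations of the same argument rather than a different one.
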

\begin{proof}
We prove the second relation. Setting $\psi = \exp_\ast\ph$, we have
\begin{equation*}
 \Lambda(\psi)(t) = \sum_{k\geqs0} \frac{1}{k!} \Lambda(\ph^{\ast k})(t) 
 = \sum_{k\geqs0} \frac{1}{k!} \Lambda(\ph)(t)^k 
 = \exp(\Lambda(\ph)(t))\;.
\end{equation*} 
The other relations are proved in a similar way. 
\end{proof}

\subsection{Moments, cumulants and Wick exponential}

Let $\sX$ be a real-valued random variable having moments of all orders. 
We associate with it the linear map $\mu_\sX:\R[X]\to\R$ given by 
\begin{equation*}
 \mu_\sX(X^n) = \expec{\sX^n}\;.
\end{equation*} 
Note that $\mu_\sX\in\sL_1$, since $\expec{1} = 1$. The associated power series 
\begin{equation}
\label{eq:moment_generating} 
 \Lambda(\mu_\sX)(t) = \sum_{n\geqs0} \frac{t^n}{n!} \expec{\sX^n}
 = \expec{\e^{t\sX}}
\end{equation} 
is the \emph{moment generating function} of $\sX$.
The \emph{cumulant generating function} of $\sX$ is defined as 
\begin{equation*}
 K_\sX(t) = \log\expec{\e^{t\sX}}\;.
\end{equation*} 
\Cref{cor:Lambda} implies 
\begin{equation*}
 K_\sX(t) = \Lambda(\log_\ast \mu_\sX)(t) 
 = \Lambda(\kappa_\sX)(t)\;,
\end{equation*} 
where
\begin{equation*}
 \kappa_\sX = \log_\ast \mu_\sX\;.
\end{equation*} 
This is nothing but the classical moment-cumulant relation. In particular, \eqref{eq:logstar}
implies
\begin{equation*}
 \kappa_\sX(X^n) 
 = \sum_{k=1}^n \frac{(-1)^{k+1}}{k}
  \sum_{\substack{n_1,\dots,n_k\geqs1\\n_1+\dots+n_k=n}}
 \frac{n!}{n_1!\dots n_k!} \mu_\sX(X^{n_1})\dots\mu_\sX(X^{n_k})\;.
\end{equation*} 
We now define the \emph{Wick exponential} associated to $\sX$ by 
\begin{equation}
\label{eq:Wick_expo} 
 \W = (\mu_\sX^{-1} \otimes \id) \Delta 
 = (\exp_\ast(-\kappa_\sX) \otimes \id) \Delta\;. 
\end{equation} 
One easily checks that $\W$ is an element of $\sL_1$, while~\eqref{eq:phi_inverse} and~\eqref{eq:expstar}
imply
\begin{equation}
\label{eq:W(Xn)} 
 \W(X^n) 
 = \sum_{k=0}^n \sum_{j=1}^k \frac{(-1)^j}{j!}
 \sum_{\substack{k_1,\dots,k_j\geqs1\\k_1+\dots+k_j=k}}
 \frac{n!}{(n-k)!k_1!\dots k_j!} \kappa_\sX(X^{k_1})\dots\kappa_\sX(X^{k_j})X^{n-k}\;.
\end{equation} 
The following result shows that $\W(t,X) := \Lambda(\W)(t)$ can 
be interpreted as a generating function. 

\begin{proposition}
\label{prop:Wick_map} 
One has the relation
\begin{equation*}
 \W(t,X) 
 = \frac{\e^{tX}}{\expec{\e^{t\sX}}}
 = \e^{tX - K_\sX(t)}\;.
\end{equation*} 
\end{proposition}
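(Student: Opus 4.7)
The plan is to apply the algebra morphism $\Lambda$ of Theorem~\ref{thm:conv} to the definition~\eqref{eq:Wick_expo} of $\W$. The only minor subtlety is that $\W$ takes values in $\R[X]$ rather than in $\R$, so one first extends $\Lambda$ to $\cL(\R[X],\R[X])$ by the same formula $\Lambda(\ph)(t) = \sum_{n\geqs0}\ph(X^n) t^n/n!$, now viewed as an element of $\R[X][[t]]$. The Cauchy product computation in the proof of Theorem~\ref{thm:conv} uses only commutativity of the codomain, so it extends verbatim, yielding $\Lambda((\ph\otimes\psi)\Delta)(t) = \Lambda(\ph)(t)\cdot\Lambda(\psi)(t)$ whenever at least one of the two maps is scalar-valued.

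With this in hand, I would compute the one new ingredient $\Lambda(\id_{\R[X]})(t) = \sum_{n\geqs0} X^n t^n/n! = \e^{tX}$. Applying the extended $\Lambda$ to $\W = (\mu_\cX^{-1}\otimes\id)\Delta$ then gives $\Lambda(\W)(t) = \Lambda(\mu_\cX^{-1})(t)\cdot\e^{tX}$, while the first relation of Corollary~\ref{cor:Lambda} combined with \eqref{eq:moment_generating} yields $\Lambda(\mu_\cX^{-1})(t) = 1/\Lambda(\mu_\cX)(t) = 1/\expec{\e^{t\cX}}$. This establishes the first equality.

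For the second equality I would repeat the same argument starting from the alternative form $\W = (\exp_\ast(-\kappa_\cX)\otimes\id)\Delta$: the second relation of Corollary~\ref{cor:Lambda}, applied to $-\kappa_\cX \in \cL_0$, gives $\Lambda(\exp_\ast(-\kappa_\cX))(t) = \exp(-\Lambda(\kappa_\cX)(t)) = \e^{-K_\cX(t)}$, so the same factorisation produces $\W(t,X) = \e^{-K_\cX(t)}\cdot \e^{tX} = \e^{tX - K_\cX(t)}$. Consistency with the first form is automatic via $\expec{\e^{t\cX}} = \e^{K_\cX(t)}$. There is no substantive obstacle: everything reduces to the already-proven properties of the morphism $\Lambda$, so the only item to verify is that its Cauchy-product behaviour survives the harmless extension from $\R$-valued to $\R[X]$-valued targets.
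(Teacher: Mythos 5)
Your proposal is correct and follows essentially the same route as the paper: identify $\W(X^n) = (\mu_\cX^{-1}\ast\id)(X^n)$, apply the morphism property of $\Lambda$ together with Corollary~\ref{cor:Lambda} and~\eqref{eq:moment_generating}. Your explicit remark about extending $\Lambda$ to $\R[X]$-valued maps is a point the paper passes over silently, but it is the same argument.
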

\begin{proof}
Observe that 
\begin{equation*}
 \W(X^n)
 = \sum_{k=0}^n \binom{n}{k} \mu_\sX^{-1}(X^k) X^{n-k} 
 = (\mu_\sX^{-1} \ast \id)(X^n)\;.
\end{equation*} 
Therefore, \Cref{thm:conv} implies 
\begin{equation*}
 \Lambda(\W)(t) 
 = \Lambda(\mu_\sX^{-1} \ast \id)(t)
 = \Lambda(\mu_\sX^{-1})(t)\Lambda(\id)(t)
 = \frac{\Lambda(\id)(t)}{\Lambda(\mu_\sX)(t)}
 = \frac{\e^{tX}}{\expec{\e^{t\sX}}}\;,
\end{equation*} 
where we have used \Cref{cor:Lambda} and~\eqref{eq:moment_generating}. 
\end{proof}

Note in particular that $\expec{\W(t,\sX)} = 1$. This is a form of orthogonality relation, 
as it shows that $\expec{\W(\sX^n)} = 0$ for all $n\geqs1$. 

\begin{example}
If $\sX$ is centered Gaussian with variance $\sigma^2$, than $\kappa_\sX(X^2) = \sigma^2$, 
while $\kappa_\sX$ vanishes everywhere else. Therefore, $\W(t,X) = \e^{tX-\sigma^2t^2/2}$ 
is the generating function of Hermite polynomials, and~\eqref{eq:W(Xn)} yields
\begin{equation*}
 \W(X^n) = n! \sum_{k=0}^{\lfloor n/2\rfloor} \frac{(-1)^k}{k!(n-2k)!2^k} \sigma^{2k} X^{n-2k}\;,
\end{equation*} 
which is the $n$th Hermite polynomial with variance $\sigma^2$. 
\end{example}

\subsection{Bell polynomials}
\label{ssec:Bell} 

We consider now the case where the cumulants are given by 
\begin{equation*}
 \kappa_\sX(X^n) = 
 \begin{cases}
  0 & \text{if $n = 1$\;,}\\
  Y_n & \text{if $n \geqs 2$\;,}
 \end{cases}
\end{equation*} 
where the $Y_n$ are for now considered as real parameters. If we assume that only finitely 
many $Y_n$ are different from zero, all sums will be well-defined.

\begin{remark}
Like in~\cite{EFPTZ18}, we work with \emph{formal} power series and do not deal with issues 
of convergence. In particular, the above definition of~$\kappa_\sX(X^n)$ is not in contradiction 
to~Marcinkiewicz' theorem about the cumulants of real-valued random variables.
Besides, we will only be interested in the case when the~$Y_n$'s are elements in an algebra, 
see \Cref{ssec:algebra_valued} below.
\end{remark}

Then we have 
\begin{equation*}
 \W(t,X) = \e^{tX - K_\sX(t)}
 = \exp\biggset{tX - \sum_{n\geqs2} Y_n\frac{t^n}{n!}}\;,
\end{equation*} 
which is the generating function of the exponential Bell polynomials. Namely,
\begin{equation*}
 \W(t,X) = \sum_{n=0}^\infty B_n(X,-Y_2,\dots,-Y_n) \frac{t^n}{n!}\;,
\end{equation*} 
where $B_n$ is by definition the $n$th complete exponential Bell polynomial. 

\begin{lemma}
\label{lem:Bell} 
The $n$th complete exponential Bell polynomial can be written 
\begin{equation}
\label{eq:Bell2} 
 B_n(X,-Y_2,\dots,-Y_n)
= n!
\sum_{\substack{j_1,\dots,j_n\geqs0\\j_1+2j_2+\dots +nj_n = n}}
\frac{1}{j_1!} \biggpar{\frac{X}{1!}}^{j_1} 
\prod_{p=2}^n \frac{1}{j_p!} \biggpar{\frac{-Y_p}{p!}}^{j_p}\;.
\end{equation} 
\end{lemma}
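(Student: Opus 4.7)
The plan is to extract the coefficient of $t^n/n!$ from the generating function identity
$$\W(t,X) = \exp\biggset{tX - \sum_{p\geqs2} Y_p \frac{t^p}{p!}}$$
established in Section~\ref{ssec:Bell}, since by definition $B_n(X,-Y_2,\dots,-Y_n)$ is precisely this coefficient. To unify notation, I would set $a_1 = X$ and $a_p = -Y_p$ for $p\geqs2$, so that
$$\W(t,X) = \exp\biggset{\sum_{p\geqs1} a_p \frac{t^p}{p!}} = \sum_{k\geqs0}\frac{1}{k!}\biggpar{\sum_{p\geqs1}a_p\frac{t^p}{p!}}^{k}.$$

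Next, I would apply the multinomial theorem to each $k$th power, yielding
$$\biggpar{\sum_{p\geqs1}a_p\frac{t^p}{p!}}^{k} = \sum_{\substack{j_1,j_2,\dots\geqs0\\ j_1+j_2+\dots = k}}\frac{k!}{\prod_p j_p!}\prod_{p\geqs1}\biggpar{\frac{a_p}{p!}}^{j_p} t^{p\,j_p}.$$
Substituting back and exchanging summation over $k$ with summation over $(j_1,j_2,\dots)$ cancels the $k!$ prefactor against the $1/k!$ from the exponential series and removes the constraint $\sum_p j_p = k$, leaving
$$\W(t,X) = \sum_{j_1,j_2,\dots\geqs0}\biggpar{\prod_{p\geqs1}\frac{1}{j_p!}\biggpar{\frac{a_p}{p!}}^{j_p}}\, t^{\sum_p p\, j_p}.$$
The exchange is legitimate at the level of formal power series because, for any fixed total $t$-degree, only finitely many tuples contribute.

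Finally, I would collect the terms with $\sum_p p\,j_p = n$ (which automatically forces $j_p = 0$ for $p > n$), multiply by $n!$ to convert the coefficient of $t^n$ into the coefficient of $t^n/n!$, and substitute back $a_1 = X$ and $a_p = -Y_p$ to obtain \eqref{eq:Bell2}. There is no real obstacle in the argument; the only care needed is the multinomial bookkeeping and the observation that $j_1 + 2j_2 + \dots + nj_n = n$ is the combined constraint coming from the $t$-degree. Alternatively, one could invoke the classical Faà di Bruno / Bell formula directly, since the identity to be proved is exactly the standard explicit form of the complete exponential Bell polynomial evaluated at $(X,-Y_2,\dots,-Y_n)$.
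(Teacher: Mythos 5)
Your proof is correct, but it follows a genuinely different route from the paper's. The paper never expands the generating function: it starts from the explicit coefficient-level formula \eqref{eq:W(Xn)} for $\W(X^n)$ (obtained from $\mu_\cX^{-1}=\exp_\ast(-\kappa_\cX)$ and the convolution product), specialises the cumulants to $\kappa_\cX(X)=0$, $\kappa_\cX(X^p)=Y_p$, and then reindexes the resulting sum over \emph{ordered} compositions $k_1+\dots+k_j=k$ by the multiplicities $j_p=\#\{i:k_i=p\}$, with a multinomial coefficient $\binom{j}{j_2\dots j_k}$ accounting for the permutations of the $k_i$ and the substitution $j_1=n-k$ absorbing the remaining constraints. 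You instead work entirely on the generating-function side: since the paper defines $B_n$ precisely as the coefficient of $t^n/n!$ in $\W(t,X)=\exp\bigl\{tX-\sum_{p\geqs2}Y_p\,t^p/p!\bigr\}$, expanding the exponential via the multinomial theorem and reading off that coefficient is a complete proof, and your identification of the constraint $j_1+2j_2+\dots+nj_n=n$ with the $t$-degree is exactly the right bookkeeping (the interchange of sums is harmless for formal power series, as you note). Your route is the classical derivation of the explicit Bell-polynomial formula and is arguably cleaner, since the multiplicity vectors appear automatically rather than through a composition-to-partition conversion. What the paper's route buys is that the same reindexing argument is reused verbatim in Section~\ref{ssec:free_algebra} for $(S_H\otimes\id)\Delta_H(X^n)$, where the computation must be carried out at the level of coefficients in the free algebra $H$ and no generating-function shortcut is available in the same form; the paper explicitly refers back to the proof of Lemma~\ref{lem:Bell} there.
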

\begin{proof}
By~\eqref{eq:W(Xn)}, and since $\kappa_\sX(X) = 0$, we have 
\begin{equation}
B_n(X,-Y_2,\dots,-Y_n)
= \W(X^n) 
= n!\sum_{k=0}^n \sum_{j=1}^k \frac{(-1)^j}{j!}
 \sum_{\substack{k_1,\dots,k_j\geqs2\\k_1+\dots+k_j=k}}
 \frac{Y_{k_1}}{k_1!} \dots  \frac{Y_{k_j}}{k_j!} \frac{X^{n-k}}{(n-k)!}\;.
\label{eq:Bell1} 
\end{equation}
We may rewrite this expression in a different way, by ordering terms according to the 
value of the $k_i$. For $p\geqs2$, let $j_p$ denote the number of indices $k_i$ equal to $p$
(this number may be $0$). For a given value of $k$ and $j$, the sum of all $j_p$ has to be equal 
to $j$, while the condition $k_1+\dots+k_j=k$ translates into $2j_2 + 3j_3 + \dots + kj_k = k$. 
It is also important to note that in~\eqref{eq:Bell1}, permutations of the $k_i$ are allowed, 
and count as different terms. As a result, we have 
\begin{align*}
    B_n(X,-Y_2,\dots,-Y_n)
    = & n!\sum_{k=0}^n \sum_{j=1}^k \frac{(-1)^j}{j!} \times \\
    & \quad \times \sum_{\substack{j_2,\dots,j_k\geqs0\\j_2+j_3+\dots +j_k = j\\2j_2+3j_3+\dots +kj_k = k}}
    \binom{j}{j_2\dots j_k} \biggpar{\frac{Y_2}{2!}}^{j_2} \dots \biggpar{\frac{Y_k}{k!}}^{j_k}
    \frac{X^{n-k}}{(n-k)!}\;,
\end{align*}
where the multinomial coefficient accounts for the permutations of the $k_i$. Setting 
$j_1 = n-k$ and distributing the factor $(-1)^j$ over the $Y_p$, the sum over $j$ and 
the condition on the sum of the $j_p$ can be dropped, 
leading to~\eqref{eq:Bell2}. 
\end{proof}

The complete Bell polynomial can be decomposed as a sum of incomplete Bell polynomials
according to the sum of the $j_p$. Namely, one has 
\begin{equation*}
 B_n(X,-Y_2,\dots,-Y_n) 
 = \sum_{k=1}^n B_{n,k}(X,-Y_2,\dots,-Y_{n-k+1})\;, 
\end{equation*} 
where 
\begin{equation*}
 B_{n,k}(X,-Y_2,\dots,-Y_{n-k+1})
= n!
\sum_{\substack{j_1,\dots,j_{n-k+1}\geqs0\\
j_1+\dots+j_{n-k+1}=k\\j_1+2j_2+\dots +(n-k+1)j_{n-k+1} = n}}
\frac{1}{j_1!} \biggpar{\frac{X}{1!}}^{j_1} 
\prod_{p=2}^{n-k+1} \frac{1}{j_p!} \biggpar{\frac{-Y_p}{p!}}^{j_p}\;.
\end{equation*} 
The Bell polynomials have a simple combinatorial interpretation. The coefficients of 
$B_{n,k}$ count the number of partitions of a set of cardinality $n$ into $k$ subsets, 
where the sizes of the subsets is encoded into the monomial. For instance, 
\begin{equation*}
 B_{5,3}(X,Y_2,Y_3) = 15 X Y_2^2 + 10 X^2 Y_3
\end{equation*} 
means that there are $15$ ways of partitioning a set of $5$ elements into $3$ subsets 
of sizes $1$, $2$ and $2$, and $10$ ways of partitioning it into $3$ subsets of sizes $1$, $1$ 
and $3$. Another interpretation is in terms of substitutions $X^2\mapsto Y_2$, $X^3\mapsto Y_3$, 
\dots, $X^n\mapsto Y_n$: then, $B_n(X,Y_2,\dots,Y_n)$ is obtained by applying these substitutions in all 
possible ways to the monomial $X^n$. 

\subsection{Free algebra}
\label{ssec:free_algebra} 

One drawback of the above definition of the Wick map $\W$ is that $\mu_\sX$ is not a character 
(in gene\-ral, $\mu_\sX(X^n X^m) \neq \mu_\sX(X^n) \mu_\sX(X^m)$), and therefore $\mu_\sX^{-1}$ 
cannot be expressed in terms of the antipode of the polynomial Hopf algebra $\R[X]$. 

This problem can be fixed by lifting $\mu_\sX$ to the symmetric Hopf algebra 
$H = S(\R[X])$, which is the free commutative algebra over $\R[X]$. We will use 
the symbol $\odot$ to denote the product in $H$. The algebra $H$ being free means 
that $X^n\odot X^m$ is an element of $H$ different from $X^{n+m}$. The map $\mu_\sX$ 
lifts in a unique multiplicative way to a map $\hat\mu: H\to\R$. 

We denote the antipode on $H$ by $S_H:H\to H$. \emph{Takeuchi's formula} \cite{Tak73} states that 
\begin{equation*}
 S_H(X^n) = \sum_{k=1}^n (-1)^k \sum_{\substack{n_1,\dots,n_k\geqs1\\n_1+\dots+n_k=n}}
 \frac{n!}{n_1!\dots n_k!} X^{n_1} \odot \dots \odot X^{n_k}\;.
\end{equation*} 
The inverse of $\hat\mu_\sX$ is then given by $\hat\mu_\sX^{-1} = \hat\mu_\sX \circ S_H$, 
which is compatible with~\eqref{eq:phi_inverse}. As a consequence, the following 
diagram commutes, where $\iota$ denotes the canonical injection and $\Delta_H$ 
denotes the coproduct on $H$:

\begin{equation}  
\label{comdiag:free}
\begin{tikzcd}[column sep=large, row sep=large]
\R[X]
\arrow[r, "\iota", hook] 
\arrow[d, "\Delta"']
\arrow[ddd, bend right=60, "\W"']
& H
\arrow[d, "\Delta_H"] 
\\
\displaystyle
\R[X] \otimes \R[X]
\arrow[r, "\iota\otimes\iota", hook] 
\arrow[dd, "\mu_\sX^{-1}\otimes\id"']
& H\otimes H
\arrow[d, "S_H\otimes\id"] 
\\
& H\otimes H 
\arrow[d, "\hat\mu_\sX\otimes\id"] \\
\R[X] 
\arrow[r, "\iota", hook] 
& H
\end{tikzcd}
\end{equation}

We have explicitly 
\begin{equation*}
 (S_H\otimes\id)\Delta_H(X^n) 
 = n!\sum_{k=0}^n \sum_{j=1}^k (-1)^j 
 \sum_{\substack{k_1,\dots,k_j\geqs1\\k_1+\dots+k_j=k}}
 \frac{X^{k_1}}{k_1!} \odot \dots \odot \frac{X^{k_j}}{k_j!} \otimes \frac{X^{n-k}}{(n-k)!}\;.
\end{equation*} 
As in the proof of \Cref{lem:Bell}, we can rewrite this expression in terms of 
the number $j_p$ of indices $k_i$ equal to $p$, and distributing the factor $(-1)^j$ 
over the powers of $X$. The result is 
\begin{equation*}
 (S_H\otimes\id)\Delta_H(X^n) 
 = n!\sum_{k=0}^n \sum_{j=1}^k j!
 \sum_{\substack{j_1,\dots,j_k\geqs0\\j_1+j_2+\dots+j_k=j\\j_1+2j_2+\dots+kj_k=k}}
 \bigodot_{p=1}^j \frac{1}{j_p!} \biggpar{\frac{-X^p}{p!}}^{\odot j_p}
 \otimes \frac{X^{n-k}}{(n-k)!}\;.
\end{equation*} 
Note the factor $j!$, which will disappear when applying $\hat\mu_\sX\otimes\id 
= \exp_\ast(\hat\kappa_\sX)\otimes\id$ to this expression to recover~\eqref{eq:Bell2}. 

\subsection{Algebra-valued moments}
\label{ssec:algebra_valued} 

The above setting can be extended to the case where the map $\mu_\sX$ takes values 
in an arbitrary commutative algebra $A$. This can be done by considering an 
\emph{extension of scalars}: consider the space $A[X] = A\otimes\R[X]$, endowed 
with a left $A$-module structure given by $a\cdot(b\otimes P) = ab\otimes P$. 
It can be thought of as consisting of polynomials with coefficients in $A$. 

For any algebra $A$, the space of linear maps $\sL(\R[X],A)$ is an algebra for 
the convolution product
\begin{equation*}
 \ph \ast \psi = m_A (\ph\otimes\psi) \Delta\;,
\end{equation*} 
where $m_A$ is the multiplication map, defined by $m_A(a\otimes b) = ab$. 
Its unit element is $u_A\circ\eps: \R[X]\to A$, where $u_A$ denotes the projection 
on the unit of $A$. 

All the properties of the previous sections hold in this setting as well. In particular, 
the Wick exponential~\eqref{eq:Wick_expo} is now a map $\W:\R[X]\to A[X]$. It satisfies 
\begin{equation*}
 m_A \circ \mu^A_\sX \circ \W = u_A\circ \eps\;,
\end{equation*} 
where $\mu^A_\sX = \id\otimes\mu_\sX: A[X]\to A$ is the extension of $\mu_\sX$. 

We will work in the particular setting where $A = \R[Y]$ is the polynomial algebra in a
single variable $Y$. Then one can identify $A[X] = \R[Y]\otimes\R[X]$ with $\R[X,Y]$, 
via the identification of $Y^m\otimes X^n$ with $X^nY^m$. Note that we have 
\begin{equation*}
 \mu^A_\sX(Y^m\otimes X^n) = Y^m \mu_\sX(X^n)\;.
\end{equation*}


\section{Multi-indices}
\label{sec:multiindices} 

The combinatorics of Feynman diagrams becomes quite involved as their size increases. 
It can be simplified, however, by working with multi-indices instead of graphs. 
Multi-indices, introduced in~\cite{LOT_2023}, are monomials encoding information on 
the arity of a graph. A single 
multi-index corresponds in general to a linear combination of several different graphs, 
thereby reducing the complexity of the combinatorics, while keeping information that 
is essential for BPHZ renormalisation. We introduce these objects by 
summarising material from~\cite{bruned2025renormalisingfeynmandiagramsmultiindices}. 
The key result in this section is \Cref{prop:coproduct_multiindex}, which 
provides an explicit expression for the coproduct of the multi-indices corresponding 
to monomials $X^n$. 


\subsection{Definition of multi-indices}
\label{ssec:multiind} 

We introduce here some definitions and results 
from~\cite{bruned2025renormalisingfeynmandiagramsmultiindices}.

We fix a set of abstract variables $(z_k)_{k\in\N}$. Given a map $\beta:\N\to\N$ 
with finite support, mea\-ning that the number of non-zero values of $\beta$ is finite, 
the associated \emph{multi-index} is the monomial 
\begin{equation*}
 z^\beta = \prod_{k\in\N} z_k^{\beta(k)}\;.
\end{equation*} 
Let $\Gamma = (\sV,\sE) \in\F$ be a connected Feynman diagram. We associate to it the 
multi-index 
\begin{equation}
\label{eq:def_counting_map} 
 \Phi(\Gamma) = \prod_{v\in\cV} z_{k(v)}\;,
\end{equation} 
where $k(v)$ denotes the \emph{arity} of the vertex $v$, that is, the number of 
edges adjacent to $v$. The map $\Phi$ is called \emph{counting map}. For instance, 
we have 
\begin{equation*}
 \Phi\bigpar{\FGIII} = z_3^2\;, \qquad 
 \Phi\Bigpar{\FGIIIplus} = z_2 z_4^2\;.
\end{equation*} 
Note that in our situation, $\beta(k)$ can only differ from $0$ for $k\in\set{2,3,4}$. 
The length and the degree of a multi-index are defined, respectively, by 
\begin{equation*}
 \bigabs{z^\beta} = \sum_{k\in\N} \beta(k)\;, \qquad
 \deg(z^\beta) = d \bigpar{\bigabs{z^\beta}-1}
 -\frac{d-2}{2} \sum_{k\in\N} k\beta(k)\;.
\end{equation*} 
Multi-indices with non-positive degree are called \emph{divergent}.
The degree is preserved by the counting map, that is, 
\begin{equation*}
 \deg\bigpar{\Phi(\Gamma)} = \deg(\Gamma)
 \qquad \forall\Gamma\in\F\;
\end{equation*} 
where the degree for a Feynman diagram~$\Gamma$ has been introduced in~\eqref{e:deg_graphs} above.
In addition, a multi-index has a symmetry factor, defined by 
\begin{equation*}
 S_\tM(z^\beta) = \prod_{k\in\N} \beta(k)!(k!)^{\beta(k)}\;.
\end{equation*} 
A Feynman diagram $\Gamma$ also has a symmetry factor, defined as 
\begin{equation*}
 S_\tF(\Gamma) = \bigabs{\Aut(\Gamma)}\;,
\end{equation*} 
where $\Aut(\Gamma)$ denotes the \emph{automorphism group} of $\Gamma$, that is, the 
permutations of vertices and edges of $\Gamma$ that leave the diagram invariant 
(see~\cite[Definition~4.1]{bruned2025renormalisingfeynmandiagramsmultiindices} for details). 
This allows to define the inverse map of~\eqref{eq:def_counting_map}, given by 
\begin{equation}
\label{eq:PM} 
 \sP_\tM(z^\beta) = \sum_{\Gamma\colon\Phi(\Gamma)=z^\beta}
 \frac{S_\tM(z^\beta)}{S_\tF(\Gamma)} \Gamma\;, 
\end{equation} 
see~\cite[Definition~3.2 and Proposition~3.4]{bruned2025renormalisingfeynmandiagramsmultiindices}.

In an analogous way to what we did for Feynman diagrams, we introduce the following spaces:
\begin{itemize}
    \item $\M$ is the set of non-empty multi-indices (meaning that the $\beta(k)$ cannot all be zero), which also belong to the image $\Phi(\sF)$,
    and $\spanM$ is its linear span; 
    \item $\Mminus \subset \M$ denotes the divergent multi-indices in~$\M$, and $\spanMminus$ its linear span;  
    \item $\sM$ is the algebra generated by~$\M$ w.r.t. the forest product~$\odot$, and~$\spansM$ is its linear span;
    \item $\sMminus \subset \sM$ is the subalgebra of~$\sM$ generated by~$\Mminus$;
    we also let~$\spansMminus$ denote the span of~$\sMminus$. 
\end{itemize}
We will denote the neutral element w.r.t.~$\odot$ by $\unit_\tM$,
and by $\Pi_\tM$ a valuation map on multi-indices. The objects defined 
above can be extended to forests in $\sM$. For instance, 
\begin{align*}
 \Phi(\Gamma_1\cdot \ldots \cdot \Gamma_n) 
 &= \Phi(\Gamma_1) \odot \ldots \odot \Phi(\Gamma_n)\;, \\
 \bigabs{z^{\beta_1} \odot \ldots \odot z^{\beta_n}}
 &= \bigabs{\beta_1} + \dots + \bigabs{\beta_n}\;, \\
 \deg\bigpar{z^{\beta_1}\odot\ldots\odot z^{\beta_n}}
 &= \deg\bigpar{\beta_1}  + \dots + \deg\bigpar{\beta_n}\;,
\end{align*}
while the symmetry factor of a forest of multi-indices is defined as 
\begin{equation*}
 S_\tM\bigpar{(z^{\beta_1})^{\odot r_1}\odot\ldots\odot (z^{\beta_n})^{\odot r_n}}
 = \prod_{i=1}^n 
 r_i! \bigpar{S_\tM(z^{\beta_i})}^{r_i}\;,
\end{equation*} 
where the $\beta_i$ are all assumed to be different from each other. 

In~\cite{bruned2025renormalisingfeynmandiagramsmultiindices}, the authors also construct 
a coproduct $\Delta_\tM$ and a twisted antipode $\tilde\sA_\tM$ such that the following diagram 
commutes (see~\cite[Theorem~4.9]{bruned2025renormalisingfeynmandiagramsmultiindices}):
\begin{equation}  
\label{comdiag:M-G}
\begin{tikzcd}[column sep=large, row sep=large]
\spansM
\arrow[r, "\sP_\tM"] 
\arrow[d, "\Delta_\tM"']
& \spansF
\arrow[d, "\DeltaCK"] 
\\
\displaystyle
\spansMminus \otimes \spansM
\arrow[r, "\sP_\tM\otimes\sP_\tM"] 
\arrow[d, "\Pi_\tM\tilde{\sA}_\tM\otimes\id"']
& \spansFminus\otimes \spansF
\arrow[d, "\Pi_N\tilde{\sA}\otimes\id"] 
\\
\spansM
\arrow[r, "\sP_\tM"] 
& \spansF
\end{tikzcd}
\end{equation}
The structure of the renormalisation map on Feynman diagrams is thus exactly 
mirrored on the space of multi-indices. Note that this diagram remains 
valid if all instances of $\spansM$ are replaced by $\spanM$, and all 
instances of $\spansF$ are replaced by $\spanF$.


\subsection{Subdivergences}
\label{ssec:subdiv}

Within the setting of multi-indices, it is easy to identify the structure of divergent 
subdiagrams (or \emph{subdivergences}), as we do in the following lemma.
Recall that~$\Fminus$ denotes the set of divergent \emph{connected} Feynman diagrams 
which, as is our standing assumption, have no free legs (called vacuum diagrams).

\begin{lemma}
\label{lem:subdivergences} 
Let~$\Gamma \in \Fminus$. Then we necessarily have
\begin{equation*}
\Phi(\Gamma) \in \left\{z_2z_4^{p-1}\right\}_{p \geqs3}\bigcup \left\{z_3^2 z_4^{p-2}\right\}_{p \geqs2} \,.
\end{equation*}
In other words: The only potentially divergent subdiagrams are those having multi-index 
$z_2z_4^{p-1}$ with $p\geqs3$ or $z_3^2 z_4^{p-2}$ with $p\geqs2$. 
\end{lemma}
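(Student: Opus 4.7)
The plan is to combine the explicit degree formula for multi-indices with two structural constraints inherited from the parents: every vertex has arity at most $4$ (since vertices of graphs in $\mathfrak G$ come from $X$-vertices of arity $4$ and, possibly, $Y$-vertices of arity $2$), and no self-loops are allowed (Wick ordering forbids contraction of two half-edges sitting at the same vertex). Writing $n_k := \beta(k)$ and expanding the definition of $\deg$, I would start from
\begin{equation*}
 \deg(z^\beta) \,=\, d n_0 \,+\, \tfrac{d+2}{2} n_1 \,+\, 2 n_2 \,+\, \tfrac{6-d}{2} n_3 \,+\, (4-d) n_4 \,-\, d,
\end{equation*}
noting that $n_k = 0$ for $k\geqs 5$ by the arity constraint and that, since the subdiagram is a valid multigraph, $\sum_k k n_k$ is even, hence $n_1 + n_3$ is even.

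Next, I would rule out the contributions from $k\in\{0,1\}$. Isolated vertices ($k=0$) do not occur in proper subdiagrams. For $n_1\geqs 1$, parity forces $n_3\geqs 1$ and the joint contribution $\tfrac{d+2}{2}+\tfrac{6-d}{2}=4$ already exceeds $d$ for $d<4$, making $\deg > 0$ and contradicting divergence. Hence $n_1 = 0$ and $n_3$ is even. For $d<4$, the remaining coefficients $2$, $\tfrac{6-d}{2}$ and $4-d$ are positive, which immediately gives $n_2\leqs 1$ (else $2 n_2 \geqs 4 > d$) and $n_3\leqs 2$ (else $\tfrac{6-d}{2} n_3\geqs 2(6-d) > d$). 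The mixed configuration $n_2=1,\,n_3=2$ yields $\deg\geqs 2+(6-d)-d = 8-2d > 0$, so only three candidate families remain, parameterised by $n_4$: $z_4^{n_4}$, $z_2 z_4^{n_4}$, and $z_3^2 z_4^{n_4}$.

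Finally, I would exclude the candidates absent from the lemma's list, which is the heart of the argument. For $z_4^p$, every vertex has all four of its half-edges internal, so $\Gamma$ has no external legs; in a connected parent graph this is only possible when $\Gamma$ coincides with the whole parent, which is excluded from the coproduct sum by $\overline\Gamma \subsetneq \Gamma$. For $z_2$ (the case $p=1$), the single vertex would need arity $2$ inside $\Gamma$, which without self-loops is impossible since there is no second vertex to receive the edges. For $z_2 z_4$ (the case $p=2$), the arity-$4$ vertex must route all four of its internal half-edges to the single other vertex available, since self-loops are forbidden; this forces the neighbour to have arity $4$ rather than $2$, a contradiction. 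This yields precisely $p\geqs 3$ in the first family and $p\geqs 2$ in the second. The main obstacle is conceptual rather than computational: the bare degree count would admit both $z_2$ and $z_2 z_4$ as divergent, and it is only the combined use of absence of self-loops and properness of the subdiagram that enforces the stated restrictions.
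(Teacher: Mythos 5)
Your proof is correct and rests on the same degree computation as the paper's, but it is organised differently and is in fact more complete. The paper restricts from the outset to $\beta(k)=0$ for $k\notin\set{2,3,4}$, observes that $\deg(z^\beta)$ is decreasing in $d$, and evaluates at the limiting value $d=4$ to obtain the single clean condition $2\beta(2)+\beta(3)<4$, from which (together with evenness of the number of half-edges, forcing $\beta(3)\in\set{0,2}$) the two families are read off; it leaves implicit both why arities $0$ and $1$ cannot occur and why $z_4^p$, $z_2$ and $z_2z_4$ are absent from the final list. You instead bound each coefficient separately for $d<4$, which is slightly more case-heavy but equivalent, and you supply exactly the missing justifications: arity-$0$ and arity-$1$ vertices are excluded by a degree-plus-parity count, $z_4^p$ by properness of the subdiagram inside a connected parent, and $z_2$, $z_2z_4$ by the absence of self-loops. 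This last point is genuinely useful, since the bare degree count does admit $z_2$ and $z_2z_4$ as divergent, and the lemma's lower bounds $p\geqs3$ and $p\geqs2$ have to come from somewhere. One small slip: for $n_1\geqs1$ you assert that parity forces $n_3\geqs1$, which is only true when $n_1$ is odd; when $n_1$ is even (hence $\geqs2$) the contribution $2\cdot\tfrac{d+2}{2}=d+2>d$ already makes the degree positive, so the conclusion stands, but the case split should be stated explicitly.
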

\begin{proof}
Let $\beta$ be such that $\beta(k) = 0$ for $k\not\in\set{2,3,4}$. Then 
\begin{align*}
\deg(z^\beta) 
&= -\frac{d-2}{2} \sum_{k=2}^4 k\beta(k) + d \biggpar{\sum_{k=2}^4 \beta(k) - 1} \\
&= 2\beta(2) + \biggpar{3-\frac{d}{2}}\beta(3) + (4-d)\beta(4) - d\;.
\end{align*}
This is a decreasing function of $d$. In particular, in the limiting case $d=4$, we 
get 
\begin{equation*}
\deg(z^\beta) = 2\beta(2)+\beta(3) - 4\;,
\end{equation*} 
showing that $z^\beta$ can only be divergent if $2\beta(2)+\beta(3) < 4$. 
In addition, since we are considering vacuum diagrams, the number of half-edges of the associated diagram should be even, 
which imposes $\beta(3)\in\set{0,2}$. The only options are $\beta(3) = 0$ and 
$\beta(2) = 1$, leading to $z_2z_4^{p-1}$, and $\beta(3) = 2$ and $\beta(2) = 0$, 
leading to $z_3^2 z_4^{p-2}$. 
\end{proof}

\Cref{tab:subdivergence} lists the first subdivergences appearing as the dimension 
$d$ increases. 

\begin{table}[h]
  \centering
\begin{tabular}{|c|c|c|c|c|}
\hline
Graph & Multi-index & Degree & Critical $d$ & Minimal $n$ \\
\hline
\vrule height 16pt depth 8pt width 0pt
$\FGIII$ & $z_3^2$ & $6 - 2d$ & $3 = d^*_{\text{m}}(2)$ & $4$ \\
\hline
\vrule height 20pt depth 8pt width 0pt
$\FGTwoTwoOne$ & $z_3^2z_4$ & $10 - 3d$ & $\frac{10}3 = d^*_{\text{m}}(3)$ & $5$ \\ 
\vrule height 16pt depth 8pt width 0pt
$\FGIIIplus$ & $z_2z_4^2$ &  &  &  \\ 
\hline
\vrule height 20pt depth 8pt width 0pt
$\FGQuadA$ $\FGQuadB$ $\FGQuadD$ & $z_3^2z_4^2$ & $14 - 4d$ & $\frac72 = d^*_{\text{m}}(4)$ & $6$ \\ 
\vrule height 16pt depth 12pt width 0pt
$\FGQuadC$ & $z_2z_4^3$ &  &  &  \\ 
\hline
\end{tabular}
\caption{List of the first divergent subdiagrams, with their multi-index, degree, 
value of $d$ for which they become divergent, and minimal value of $n$ such that they 
occur in $\sP(X^n)$.}
\label{tab:subdivergence} 
\end{table}


\subsection[Computation of $\Delta_\tM(z_4^n)$]{Computation of $\boldsymbol{\Delta_\tM(z_4^n)}$}
\label{ssec:Delta_M}

We will write $\mathring\Delta_\tM$ for the \emph{reduced coproduct}, which is such that 
\begin{equation*}
 \Delta_\tM(z^\beta) = \unit_\tM \otimes z^\beta 
 + z^\beta \otimes \unit_\tM + \mathring\Delta_\tM(z^\beta) \,.
\end{equation*} 
(Note that in~\cite{bruned2025renormalisingfeynmandiagramsmultiindices}, the authors 
write $\Delta_\tM$ for the reduced coproduct, and $\Delta_\tM^-$ for the full coproduct). 
A simplification arises from the fact that in BPHZ renormalisation, only terms with 
non-positive degree on the left of the tensor product play a role. We can therefore 
restrict the coproduct to these terms, and we will use the same notation for that 
restricted form. 

The following result establishes an explicit expression for the reduced coproduct 
of monomials $z_4^n$. 

\begin{proposition}
\label{prop:coproduct_multiindex} 
For any $p\geqs 2$, define 
\begin{equation}
\label{eq:def_Yp} 
 \sY_p = 
 \begin{cases}
  16 z_3^2 & \text{if $p = 2$\;,} \\
  6pz_2z_4^{p-1} + 8p(p-1)z_3^2z_4^{p-2} & \text{if $p \geqs 3$\;.}
 \end{cases}
\end{equation} 
Then for any $n\geqs4$, one has 
\begin{equation}
\label{eq:DeltaM_z4} 
 \mathring\Delta_\tM(z_4^n)
 = n! \sum_{k=2}^{n-1} \sum_{\substack{j_1,\dots,j_{n-2}\geqs0\\j_1 + j_2 + \dots + j_{n-2} = k\\ 
 j_1 + 2j_2 + \dots + (n-2)j_{n-2} = n}} 
 \bigodot_{p=2}^{n-2} \frac{1}{j_p!} \biggpar{\frac{\sY_p}{p!}}^{\odot j_p} 
 \otimes \frac{1}{j_1!} z_2^{k-j_1} z_4^{j_1}\;,
\end{equation} 
where the sum ranges over non-negative integers $j_p$. 
\end{proposition}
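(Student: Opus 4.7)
The strategy is to exploit the commutative diagram~\eqref{comdiag:M-G}: since $\cP_\tM$ sends distinct multi-indices to Feynman-diagram combinations supported on disjoint subsets of $\cG$, the map $\cP_\tM\otimes\cP_\tM$ is injective, so it is enough to verify the pushed-forward identity $(\cP_\tM\otimes\cP_\tM)[\text{RHS of \eqref{eq:DeltaM_z4}}] = \mathring\DeltaCK\,\cP_\tM(z_4^n)$ on the Feynman-diagram side. I would compute the right-hand side by expanding $\cP_\tM(z_4^n) = S_\tM(z_4^n)\sum_\Gamma \Gamma/S_\tF(\Gamma)$, with the sum running over connected Wick-ordered vacuum graphs having $n$ arity-$4$ vertices, and then applying $\mathring\DeltaCK$ to each $\Gamma$, producing a sum over divergent proper full subgraphs $\bar\Gamma\subsetneq\Gamma$.

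By Lemma~\ref{lem:subdivergences}, each connected component of $\bar\Gamma$ has multi-index in $\{z_2z_4^{p-1}\}_{p\geqs3}\cup\{z_3^2z_4^{p-2}\}_{p\geqs2}$, so I would organise the enumeration by the partition data $(j_1,j_2,\dots,j_{n-2})$, where $j_1$ counts the arity-$4$ vertices left untouched and $j_p$ counts the components of $\bar\Gamma$ of size $p$. The constraints $\sum_p p j_p=n$ and $\sum_p j_p=k$ then fix the total vertex count and the size of the quotient, whose multi-index is $z_2^{k-j_1}z_4^{j_1}$. For each partition I would count (i) the number of ways to distribute the $n$ labelled vertices into blocks of the prescribed sizes, yielding the multinomial factor $n!/(j_1!\prod_{p\geqs2}j_p!(p!)^{j_p})$; and (ii) for each size-$p$ block, the number of labelled Wick-ordered divergent subgraph configurations on it. Step (ii) is where the coefficients in \eqref{eq:def_Yp} arise: for $p=2$, the factor $16=4\cdot 4$ counts the choices of the single external leg on each of the two vertices of the theta-subgraph; for $p\geqs3$, the factors $6p$ and $8p(p-1)$ count, respectively, the labelled structures of type $z_2z_4^{p-1}$ (one distinguished vertex carrying both external legs) and of type $z_3^2z_4^{p-2}$ (two distinguished vertices each carrying one external leg). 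Summing over the pairings of the external legs of $\bar\Gamma$ with the free half-edges of the untouched vertices then reconstructs the quotient diagram on the right of the tensor product.

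The main obstacle lies in the symmetry-factor bookkeeping. The ratios $S_\tM/S_\tF$ intrinsic to $\cP_\tM$ appear at three distinct levels---on $z_4^n$, on each forest factor of the subdiagram, and on the quotient diagram---and their cross-cancellations have to be verified precisely so that applying $\cP_\tM\otimes\cP_\tM$ to the right-hand side of \eqref{eq:DeltaM_z4} faithfully matches the coefficient of every term in $\mathring\DeltaCK\cP_\tM(z_4^n)$. The technically most delicate step is the derivation of the coefficients $16$, $6p$, $8p(p-1)$ as labelled leg-pairing counts, where the Wick-ordering constraint (no self-loops) must be enforced throughout, and the automorphism groups of both the subdiagram and the quotient diagram have to be tracked against their multi-index symmetry-factor counterparts.
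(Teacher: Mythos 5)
Your proposal takes a genuinely different route from the paper. You push the identity forward to Feynman diagrams via $\cP_\tM\otimes\cP_\tM$ and propose to verify that $(\cP_\tM\otimes\cP_\tM)$ applied to the right-hand side of \eqref{eq:DeltaM_z4} equals $\DeltaCKred\,\cP_\tM(z_4^n)$, by directly enumerating full divergent subgraph forests organised by the block data $(j_1,\dots,j_{n-2})$. The paper never leaves the multi-index side: it specialises the explicit coproduct formula \eqref{eq:coprod_z4n}--\eqref{eq:defE} of Bruned--Hou to $z^\beta=z_4^n$, computes the coefficients $E$ in Lemma~\ref{lem:E} using the derivation $D$ and the symmetry factors $S_\tM$, and then resums over $(a_p,b_p)$ with the binomial theorem to produce the $\cY_p$. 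Your combinatorial reading of the coefficients is correct and consistent with the paper's values ($16=4\cdot4$ for the two external legs of the sunset, $6p=\binom{4}{2}\,p$ for $z_2z_4^{p-1}$, and $8p(p-1)=4\cdot4\cdot\binom{p}{2}$ for $z_3^2z_4^{p-2}$), and your multinomial factor in step (i) matches the weight $n!/(j_1!\prod_{p}j_p!\,(p!)^{j_p})$ appearing in \eqref{eq:DeltaM_z4}.

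There are, however, two genuine gaps. First, $\cP_\tM\otimes\cP_\tM$ is \emph{not} injective on arbitrary multi-indices: $\cP_\tM(z_2z_4)=0$, which is precisely the mechanism behind Lemma~\ref{lem:extended_coproduct}, so \lq\lq disjoint supports\rq\rq\ alone does not give injectivity. You must additionally argue that every tensor factor appearing with nonzero coefficient on either side is a forest over $\mathbf{M}=\Phi(\cG)$, i.e.\ has a nonempty graph preimage and hence a nonzero image under \eqref{eq:PM}; Lemma~\ref{lem:subdivergences} handles the left factors, but the right factors need a word as well. Second, and more seriously, the step you flag as the \lq\lq main obstacle\rq\rq\ --- reconciling the three layers of $S_\tM/S_\tF$ ratios --- is not a verification that can be deferred: on your route it \emph{is} the proof. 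Carrying it out amounts to re-establishing, for this family of graphs, the compatibility $(\cP_\tM\otimes\cP_\tM)\Delta_\tM=\DeltaCK\,\cP_\tM$ of \eqref{comdiag:M-G} together with the coefficient count; in particular you must show that the quotients $\Gamma/\overline\Gamma$, summed over all $\Gamma$ with $\Phi(\Gamma)=z_4^n$ and all full divergent forests $\overline\Gamma$, reassemble into $\cP_\tM(z_2^{k-j_1}z_4^{j_1})$ with exactly the weight $S_\tM/S_\tF$ of \eqref{eq:PM}. As written, the proposal is a sound plan with the correct target coefficients but with the decisive computation missing. Either execute that bookkeeping, or invoke \eqref{comdiag:M-G} as a black box and still perform the diagram-side enumeration --- or, more economically, follow the paper and stay on the multi-index side, where the symmetry factors are handled once and for all by \eqref{eq:defE}.
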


We will apply~\cite[Proposition~3.7]{bruned2025renormalisingfeynmandiagramsmultiindices}
to the particular case $z^\beta = z_4^n$. The general expression for the reduced 
coproduct is
\begin{equation}
\label{eq:coprod_z4n} 
 \mathring\Delta_\tM(z^\beta) = 
 \sum_{z^{\beta_1} \odot \ldots \odot z^{\beta_m} \in \cM}
 \sum_{z^\alpha\in\mathbf{M}}
 E(z^{\beta_1} \odot \ldots \odot z^{\beta_m}, z^\alpha, z^\beta) \,
 \bigpar{z^{\beta_1} \odot \ldots \odot z^{\beta_m}}\otimes z^\alpha\;, 
\end{equation} 
where the coefficients $E(\cdot, \cdot, \cdot)$ are given by 
\begin{align}
\thinspace & E(z^{\beta_1} \odot \ldots \odot z^{\beta_m}, z^\alpha, z^\beta) \label{eq:defE}\\
 & = \sum_{k_1,\dots,k_m\in\N}
 \sum_{\hat\beta_1 + \dots + \hat\beta_m + \hat\alpha = \beta}
 \frac{S_\tM(z^\beta)}{S_\tM(z^{\beta_1} \odot \ldots \odot z^{\beta_m})S_\tM(z^\alpha)} \ \times \\[0.5em]
 & \qquad \qquad \times
 \frac{\pscal{\prod_{i=1}^m\partial_{z_{k_i}}z^{\alpha}}
 {z^{\hat\alpha}}}{S_\tM(z^{\hat\alpha})}
 \prod_{i=1}^m \frac{\pscal{D^{k_i}z^{\beta_i}}{z^{\hat\beta_i}}}{S_\tM(z^{\hat\beta_i})}\;. \notag
\end{align} 
Here the inner product for multi-indices is defined by 
\begin{equation}
\label{eq:pscal_multiindex} 
 \pscal{z^\alpha}{z^\beta} = S_\tM(z^\alpha) \delta^\alpha_\beta\;, 
\end{equation} 
while $D$ is the linear map 
\begin{equation*}
 D = \sum_{k\in\N} z_{k+1} \partial_{z_k}\;.
\end{equation*} 
In what follows, we write exponents of multi-indices in the form 
$\beta = [\beta(2),\beta(3),\beta(4)]$, since all other $\beta(k)$ vanish in our 
situation. We need to apply~\eqref{eq:coprod_z4n} to $\beta = [0,0,n]$. 
Since we restrict the coproduct to terms that are divergent on the left side 
of the tensor product, \Cref{lem:subdivergences} implies that the 
$\beta_i$ are of the form $[1,0,p-1]$ with $p\geqs3$ or $[0,2,p-2]$ with $p\geqs2$.

\begin{lemma}
\label{lem:E} 
Denote by $a_p$ the number of $z^{\beta_i}$ equal to $z_2z_4^{p-1}$, and by $b_p$ 
the number of $z^{\beta_i}$ equal to $z_3^2 z_4^{p-2}$, with the convention $a_2 = 0$
to avoid case distinctions between the $a_p$ and $b_p$. Then the only non-vanishing 
coefficient in~\eqref{eq:coprod_z4n} for these $\beta_i$ is 
\begin{equation*}
 E(z^{\beta_1}\odot\ldots\odot z^{\beta_m}, z_2^m z_4^{n-s}, z_4^n)
 = \frac{n!}{(n-s)!}
 \prod_{p=2}^{n-2} \frac{1}{a_p!} \biggpar{\frac{4!}{4(p-1)!}}^{a_p} 
 \frac{1}{b_p!} \biggpar{\frac{4!}{3(p-2)!}}^{b_p}\;,
\end{equation*} 
where 
\begin{equation*}
 s = \sum_{p=2}^{n-2} pj_p\;, \qquad 
 j_p = a_p + b_p\;.
\end{equation*} 
\end{lemma}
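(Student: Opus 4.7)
The approach is to unwind the general coproduct formula~\eqref{eq:defE} in our specific setting, identifying the few terms that contribute and then simplifying. First, I would observe that since $\beta=[0,0,n]$ involves only the variable $z_4$, the constraint $\hat\beta_1 + \cdots + \hat\beta_m + \hat\alpha = \beta$ forces every $\hat\beta_i$ and $\hat\alpha$ to be a pure power of $z_4$. Because $D = \sum_k z_{k+1}\partial_{z_k}$ preserves the total number of variables in a monomial and shifts the sum of indices by $+1$, for each of the two possible shapes $\beta_i = z_2 z_4^{p-1}$ or $\beta_i = z_3^2 z_4^{p-2}$, the presence of a pure $z_4$-monomial component in $D^{k_i} z^{\beta_i}$ forces this component to be $z_4^p$ and $k_i = 2$. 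Consequently both the sum over $k_i$ and the sum over $\hat\beta_i, \hat\alpha$ collapse to a single term, with $\hat\beta_i = z_4^p$ and $\hat\alpha = z_4^{n-s}$, where $s = \sum_p p(a_p + b_p)$.

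Next I would evaluate the two families of inner products. Since $D$ is a derivation, a short direct calculation yields
\begin{align*}
D^2(z_2 z_4^{p-1}) &= z_4^p + (\text{terms involving } z_5, z_6)\,, \\
D^2(z_3^2 z_4^{p-2}) &= 2\, z_4^p + (\text{terms involving } z_5, z_6)\,,
\end{align*}
so by~\eqref{eq:pscal_multiindex} one obtains $\pscal{D^2 z^{\beta_i}}{z_4^p}/S_\tM(z_4^p) = 1$ in the first case and $2$ in the second. On the $\alpha$-side, since all $k_i = 2$, the operator $\prod_i \partial_{z_{k_i}} = \partial_{z_2}^m$ can only yield a pure $z_4$-monomial when $z^\alpha = z_2^m z_4^{n-s}$, in which case $\partial_{z_2}^m(z_2^m z_4^{n-s}) = m!\, z_4^{n-s}$. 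This simultaneously identifies $\alpha = z_2^m z_4^{n-s}$ as the unique choice with nonvanishing contribution and supplies the factor $m!$.

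The final step is a careful bookkeeping of the symmetry factors. Plugging in $S_\tM(z_4^n) = n!(4!)^n$, $S_\tM(z^\alpha) = m!(2!)^m(n-s)!(4!)^{n-s}$, $S_\tM(z_2 z_4^{p-1}) = 2(p-1)!(4!)^{p-1}$ and $S_\tM(z_3^2 z_4^{p-2}) = 72(p-2)!(4!)^{p-2}$, together with the forest formula producing the factors $a_p!$ and $b_p!$, one cancels the $m!$ from numerator and denominator and collects the $(4!)$-powers; using $s = \sum_p p(a_p+b_p)$ these combine to $(4!)^{\sum_p a_p + 2\sum_p b_p}$. The arithmetic identity $(4!)^2/72 = 4!/3$ then converts the contribution of each $z_3^2 z_4^{p-2}$ into $\bigpar{4!/(3(p-2)!)}^{b_p}/b_p!$, while the factor $(2!)^m$ in $S_\tM(z^\alpha)$ combines with the $2^{b_p}$ terms coming from the $D^2$-coefficients to produce $\bigpar{4!/(4(p-1)!)}^{a_p}/a_p!$. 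The remaining factor $n!/(n-s)!$ completes the claimed formula.

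The main obstacle is the last step: the reorganisation of symmetry factors and powers of $4!$ must be executed meticulously, and the numerological identity $(4!)^2/72 = 4!/3$, which converts the $z_3^2$ contribution into the Bell-polynomial-friendly form, is the key piece of bookkeeping that makes the coefficients match the stated form.
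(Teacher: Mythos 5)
Your proposal is correct and follows essentially the same route as the paper's proof: collapsing the sums in~\eqref{eq:defE} via the constraint that $\beta$ is a pure power of $z_4$, computing $D^2$ on the two divergent shapes to get the coefficients $1$ and $2$, identifying $\alpha = z_2^m z_4^{n-s}$ with the factor $m!$, and then reorganising the symmetry factors (your index-counting argument that forces $k_i=2$ is a slightly slicker packaging of the paper's observation that higher derivatives vanish modulo $z_{\geqs 5}$). The only blemish is in the final bookkeeping sentence, where the $2^{b_p}$ from the $D^2$-coefficients actually cancels against the $(2!)^{b_p}$ portion of $(2!)^m$ while the $(2!)^{a_p}$ portion combines with the $2!$ in $S_\tM(z_2z_4^{p-1})$ to give the $4$ in $4(p-1)!$, but the arithmetic you cite is correct and the formula comes out as stated.
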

\begin{proof}
A direct computation shows that 
\begin{align}
D(z_2 z_4^{p-1}) &= z_3z_4^{p-1} + R_1\;, 
&
D(z_3^2 z_4^{p-2}) &= 2z_3z_4^{p-1} + R_3\;, \\
D^2(z_2 z_4^{p-1}) &= z_4^p + R_2\;, 
&
D^2(z_3^2 z_4^{p-2}) &= 2z_4^p + R_4\;, 
\label{eq:Dzz} 
\end{align}
where the $R_i$ are residual terms that vanish when $z_k = 0$ for all $k\geqs5$.
All higher derivatives also vanish when $z_k = 0$ for all $k\geqs5$.

We now observe that the condition $\hat\beta_1 + \dots + \hat\beta_m + \hat\alpha
= \beta = [0,0,n]$ implies that all $\hat\beta_i$, as well as $\hat\alpha$, are 
of the form $[0,0,\star]$. It follows from~\eqref{eq:pscal_multiindex} and~\eqref{eq:Dzz} 
that $\pscal{D^{k_i}z^{\beta_i}}{z^{\hat\beta_i}} = 0$ unless $k_i = 2$ and 
$\hat\beta_i = [0,0,p]$, in which case 
\begin{equation*}
 \frac{\pscal{D^2z^{\beta_i}}{z^{\hat\beta_i}}}{S_\tM(z^{\hat\beta_i})}
 = 
 \begin{cases}
  1 & \text{if $\beta_i = [1,0,p-1]$\;,} \\
  2 & \text{if $\beta_i = [0,2,p-2]$\;.}
 \end{cases}
\end{equation*} 
Writing 
\begin{equation*}
 r = \sum_{p=2}^{n-2} b_p\;, \qquad 
 m-r = \sum_{p=2}^{n-2} a_p\;,
\end{equation*} 
we obtain 
\begin{equation}
\label{eq:prod1} 
 \prod_{i=1}^m \frac{\pscal{D^2z^{\beta_i}}{z^{\hat\beta_i}}}{S_\tM(z^{\hat\beta_i})}
 = 2^r\;.
\end{equation} 
Furthermore, we have
\begin{equation*}
 \hat\beta_1 + \dots + \hat\beta_m = 
 [0,0,\sum_{p=2}^{n-2}p j_p] = 
 [0,0,s]\;,
\end{equation*} 
and therefore the condition $\hat\beta_1 + \dots + \hat\beta_m + \hat\alpha
= \beta = [0,0,n]$ imposes 
\begin{equation*}
 \hat\alpha = [0,0,n-s]\;,
\end{equation*} 
or, equivalently, $z^{\hat\alpha} = z_4^{n-s}$. Therefore, since all $k_i$ 
are equal to $2$, we obtain 
\begin{equation}
\label{eq:prod2} 
 \frac{\pscal{\prod_{i=1}^m\partial_{z_{k_i}}z^{\alpha}}
 {z^{\hat\alpha}}}{S_\tM(z^{\hat\alpha})}
 = m! \mathbf{1}_{\alpha = [m,0,n-s]}
\end{equation} 
showing that we necessarily have $z^\alpha = z_2^m z_4^{n-s}$. 
It remains to compute some symmetry factors. First, we compute 
\begin{equation}
\label{eq:prod3} 
 \frac{S_\tM(z_4^n)}{S_\tM(z^\alpha)}
 = \frac{n!(4!)^n}{m!(2!)^m(n-s)!(4!)^{n-s}}
 = \frac{n!(4!)^s}{m!(n-s)!(2!)^m}\;.
\end{equation} 
Next, since 
\begin{align*}
 S_\tM(z_2z_4^{p-1}) &= 2!(p-1)!(4!)^{p-1}\;, \\
 S_\tM(z_3^2z_4^{p-2}) &= 2!(3!)^2(p-2)!(4!)^{p-2} 
 = 3(p-2)!(4!)^{p-1}\;,
\end{align*}
we get 
\begin{equation}
\label{eq:prod4} 
 S_\tM\bigpar{z^{\beta_1}\odot\ldots\odot z^{\beta_m}}
 = \prod_{p=2}^{n-2} a_p! \Bigbrak{2!(p-1)!(4!)^{p-1}}^{a_p} 
 b_p! \Bigbrak{3(p-2)!(4!)^{p-1}}^{b_p}\;.
\end{equation} 
Plugging~\eqref{eq:prod1}, \eqref{eq:prod2}, \eqref{eq:prod3} and~\eqref{eq:prod4} 
into~\eqref{eq:defE}, we finally obtain 
\begin{align*}
  E(z^{\beta_1}\odot\ldots\odot & z^{\beta_m}, z_2^m z_4^{n-s}, z_4^n)
 = \frac{n!}{(n-s)!} \frac{(4!)^s2^r}{(2!)^m} \times \\[0.5em]
 & \qquad \times \prod_{p=2}^{n-2} \frac{1}{a_p! \bigbrak{2!(p-1)!(4!)^{p-1}}^{a_p} 
 b_p! \bigbrak{3(p-2)!(4!)^{p-1}}^{b_p}}\;,
\end{align*} 
from which the result follows, upon expanding $(4!)^s = \prod_p 4!^{p(a_p+b_p)}$,
$2^r = \prod_p 2^{b_p}$ and finally $(2!)^m = \prod_p (2!)^{a_p + b_p}$. 
\end{proof}

\begin{proof}[\textsc{Proof of \Cref{prop:coproduct_multiindex}}]
\Cref{lem:E} and~\eqref{eq:coprod_z4n} imply 
\begin{equation*}
 \mathring\Delta_\tM(z^\beta) = 
 \sum_{s,m} F^{(n)}_{s,m} \otimes z_2^m z_4^{n-s}\;,
\end{equation*} 
where 
\begin{equation*}
 F^{(n)}_{s,m} = 
 \frac{n!}{(n-s)!} \sum_{(a_p,b_p)}
 \bigodot_{p=2}^{n-2} \frac{1}{a_p!} \biggpar{\frac{4!z_2z_4^{p-1}}{4(p-1)!}}^{\odot a_p} 
 \frac{1}{b_p!} \biggpar{\frac{4!z_3^2z_4^{p-2}}{3(p-2)!}}^{\odot b_p}\;,
\end{equation*} 
with the sum running over all $(a_p,b_p)$ satisfying 
\begin{equation*}
 \sum_{p=2}^{n-2} j_p = m\;, \qquad 
 \sum_{p=2}^{n-2} pj_p = s\;, \qquad 
 j_p = a_p + b_p\;.
\end{equation*} 
We first perform the sum over all $(a_p,b_p)$ summing to $j_p$. The binomial 
formula yields 
\begin{equation*}
 \sum_{a_p+b_p = j_p} 
 \frac{1}{a_p!} \biggpar{\frac{4!z_2z_4^{p-1}}{4(p-1)!}}^{\odot a_p} 
 \frac{1}{b_p!} \biggpar{\frac{4!z_3^2z_4^{p-2}}{3(p-2)!}}^{\odot b_p}
 = \frac{1}{j_p!} \biggpar{\frac{\sY_p}{p!}}^{\odot j_p}\;,
\end{equation*} 
where the $\sY_p$ are given in~\eqref{eq:def_Yp}. This implies 
\begin{equation*}
 F^{(n)}_{s,m} = 
 \frac{n!}{(n-s)!} \sum_{\substack{j_2+\dots+j_{n-2}=m\\2j_2+\dots+(n-2)j_{n-2}=s}} 
 \bigodot_{p=2}^{n-2}\frac{1}{j_p!} \biggpar{\frac{\sY_p}{p!}}^{\odot j_p}\;. 
\end{equation*} 
The last step is to change the summation over variables $(s,m)$ to that over 
variables $(k,j_1)$, where $k=n+m-s$ and $j_1 = n-s$. In particular, the sum of 
the $j_p$ starting from $p=1$ is now $k$, while the sum of the $pj_p$ is $n$.
\end{proof}

\begin{remark}
\label{rem:k=1} 
The sum~\eqref{eq:DeltaM_z4} can be extended to all $k$ from $1$ to $n-1$, 
because the conditions on the $j_p$ cannot be both satisfied if $k=1$ since 
$p\leqs n-2$. 
\end{remark}

\begin{remark}
On the right-hand side of~\eqref{eq:DeltaM_z4}, the only $k$-dependence occurs 
in the term $z_2^{k-j_1}$. If the expression is evaluated in $z_2 = 1$, the 
sum over $k$ and the condition $j_1+j_2+\dots+j_{n-2}=k$ can be dropped. 
\end{remark}

\begin{example}
\label{ex:n4} 
Consider the case $n=4$. Then $k$ can take the values $2$ and $3$. For $k=2$, the only 
possible decomposition is $j_1 = 0$, $j_2 = 2$, while for $k=3$, the only option is 
$j_1 = 2$, $j_2 = 1$. Since $\sY_2 = 16z_3^2$, we obtain 
\begin{align*}
 \mathring\Delta_\tM(z_4^4) 
 &= 4! \biggpar{\frac{1}{(2!)^3} \sY_2^{\odot 2} \otimes z_2^2 + \frac{1}{(2!)^2}\sY_2\otimes z_2z_4^2} \\
 &= 4! \bigpar{32 (z_3^2)^{\odot 2} \otimes z_2^2 + 4 z_3^2\otimes z_2z_4^2}\;.
\end{align*} 
We can check commutativity of the upper part of diagram~\eqref{comdiag:M-G}. Applying~\eqref{eq:PM}, 
we find 
\begin{equation*}
 \sP_\tM\bigpar{z_2^2} = 2\, \FGII\;, \qquad
 \sP_\tM\bigpar{z_3^2} = 6\, \FGIII\;, \qquad
 \sP_\tM\bigpar{z_2z_4^2} = 2^6\cdot3\, \FGIIIplus\;,
\end{equation*} 
which implies 
\begin{equation}
\label{eq:PMPMDeltaz44} 
 (\sP_\tM\otimes \sP_\tM)\mathring\Delta_\tM(z_4^4) 
 = 2^{11}\cdot3^3 
 \biggpar{\FGIII^{\cdot2}\otimes\FGII + 2\,\FGIII\otimes\FGIIIplus}\;.
\end{equation} 
On the other hand, one finds
\begin{equation*}
 \sP_\tM(z_4^4) = \sP(X^4) = 2^{11}\cdot3^3\FGXFoursing + \dots\;,
\end{equation*} 
where the dots indicate diagrams without subdivergences. 
Applying $\DeltaCKred$ to this expression indeed yields~\eqref{eq:PMPMDeltaz44}, 
since we can extract one or two \lq\lq sunset diagrams\rq\rq\ $\FGIII$. 

For $n=5$, the result is 
\begin{equation}
\label{eq:ex_n5_Delta} 
 \mathring\Delta_\tM(z_4^5)
 = 5!\biggpar{\frac{1}{2!3!}\sY_2 \odot\sY_3 \otimes z_2^2
 + \frac{1}{2!3!} \sY_3 \otimes z_2z_4^2 
 + \frac{1}{(2!)^3} \sY_2^{\odot 2} \otimes z_2^4z_4 
 + \frac{1}{2!3!} \sY_2 \otimes z_2z_4^3}\;, 
\end{equation} 
where the coefficients $(10,10,15,10)$ are as in the Bell polynomial
\begin{equation}
\label{eq:ex_n5_Bell} 
 B_5(x,y_2,y_3,y_4,y_5) 
 = y_5 + 5y_4x + 10y_2y_3 + 10y_3x^2 + 15y_2^2x + 10y_2x^3 + x^5\;,
\end{equation} 
up to boundary terms. 
\end{example}

\subsection{Extended coproduct}
\label{ssec:extended_coproduct} 

\Cref{ex:n4} above shows some differences between the terms 
appearing in the reduced coproduct (such as~\eqref{eq:ex_n5_Delta}) and 
Bell polynomials (cf.~\eqref{eq:ex_n5_Bell}). These are partly due to the fact 
that the reduced coproduct is used, instead of the full coproduct. However, 
there is also a difference due to the condition $p\leqs n-2$ on the subdivergences 
extracted from $z_4^n$. Therefore, the Bell polynomial~\eqref{eq:ex_n5_Bell} contains a term 
$5y_4x$, while $\mathring\Delta_\tM(z_4^5)$ has no term proportional to 
$\sY_4\otimes z_4$. 

Below, it will be convenient to artificially add this term to the reduced coproduct, 
that is, to consider its extended version 
\begin{equation}
\label{eq:DeltaM_z4_plus} 
 \mathring\Delta_\tM^+(z_4^n)
 = n! \sum_{k=1}^{n-1} \sum_{\substack{j_1,\dots,j_{n-1}\geqs0\\j_1 + j_2 + \dots + j_{n-1} = k\\ 
 j_1 + 2j_2 + \dots + (n-1)j_{n-1} = n}} 
 \bigodot_{p=2}^{n-1} \frac{1}{j_p!} \biggpar{\frac{\sY_p}{p!}}^{\odot j_p} 
 \otimes \frac{1}{j_1!} z_2^{k-j_1} z_4^{j_1}\;,
\end{equation} 
which differs from~\eqref{eq:DeltaM_z4} by the condition on $p$ within the product. 
The following lemma shows that this will not affect the end result; 
see \Cref{rmk:sigma_n} for a reference to the definition of~$\tilde\sA_\tM$.

\begin{lemma}
\label{lem:extended_coproduct}
For all $n\geqs4$, one has 
\begin{equation*}
 \sP_\tM\circ(\Pi_\tM\tilde\sA_\tM \otimes \id) \mathring\Delta_\tM^+(z_4^n) 
 = \sP_\tM\circ(\Pi_\tM\tilde\sA_\tM \otimes \id) \mathring\Delta_\tM(z_4^n)\;.
\end{equation*} 
\end{lemma}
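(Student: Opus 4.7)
The plan is to identify the unique extra term that $\mathring\Delta_\tM^+(z_4^n)$ contains compared to $\mathring\Delta_\tM(z_4^n)$, and then to verify that the right-hand tensor factor of that term is annihilated by $\cP_\tM$.

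Comparing~\eqref{eq:DeltaM_z4} and~\eqref{eq:DeltaM_z4_plus}, the two expressions differ only in that the latter admits $j_{n-1}\geqs 1$; their difference therefore collects all terms with $j_{n-1}\geqs 1$. The constraint $j_1+2j_2+\dots+(n-1)j_{n-1}=n$ combined with $n\geqs 4$ forces $j_{n-1}=1$ (indeed $j_{n-1}\geqs 2$ would yield $(n-1)j_{n-1}\geqs 2(n-1)>n$), and then $\sum_{p\neq n-1}p\,j_p=1$ forces $j_1=1$ with $j_p=0$ for $2\leqs p\leqs n-2$. Consequently $k=\sum_p j_p=2$, and substituting these choices into~\eqref{eq:DeltaM_z4_plus} produces the single extra contribution
\begin{equation*}
  \mathring\Delta_\tM^+(z_4^n)-\mathring\Delta_\tM(z_4^n) = n\,\cY_{n-1}\otimes z_2 z_4,
\end{equation*}
where the coefficient $n=n!/(n-1)!$ arises from the overall prefactor $n!$ combined with the $1/(n-1)!$ inside $\cY_{n-1}/(n-1)!$. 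As a sanity check, for $n=4$ this reproduces precisely the $4\,\cY_3\otimes z_2 z_4$ missing from the computation in Example~\ref{ex:n4}.

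The key point is then that $\cP_\tM(z_2 z_4)=0$. By~\eqref{eq:PM}, this is the sum over multigraphs $\Gamma\in\cG$ whose counting map equals $z_2 z_4$, i.e.\ graphs having exactly one vertex of arity $2$ and one of arity $4$. However, the graphs in $\cG$ stem from pairwise matchings of legs of Wick-ordered monomials and hence contain no self-loops: every edge joins two distinct vertices. With only two vertices available, every edge of such a $\Gamma$ would join them, contributing one unit to each of the two arities; this forces the two arities to coincide, contradicting $2\neq 4$. The indexing set is therefore empty, and $\cP_\tM(z_2 z_4)=0$.

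Combining the two steps, $(\Pi_\tM\tilde\cA_\tM\otimes\id)(n\,\cY_{n-1}\otimes z_2 z_4)$ equals the scalar multiple $n\,\Pi_\tM\tilde\cA_\tM(\cY_{n-1})\cdot z_2 z_4$, and the subsequent application of $\cP_\tM$ returns $0$ by the previous step. The two composites appearing in the statement of the lemma therefore coincide. I expect this to be essentially a bookkeeping lemma: the only conceptual point requiring care is the convention that the multigraphs comprising $\cG$ are self-loop-free (a direct reflection of Wick ordering), which is precisely what makes $\cP_\tM(z_2 z_4)$ vanish; once that is accepted, the argument is a direct combinatorial verification.
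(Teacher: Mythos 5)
Your proposal is correct and follows essentially the same route as the paper: isolate the unique extra term $n\,\cY_{n-1}\otimes z_2z_4$ forced by the constraints (with $j_{n-1}=1$, $j_1=1$, $k=2$), then kill it via $\cP_\tM(z_2z_4)=0$ because no self-loop-free pairwise matching of a $2$-leg and a $4$-leg vertex leaves no free legs. Your explicit justification of $\cP_\tM(z_2z_4)=0$ and the check that the nominally added $k=1$ stratum is empty are slightly more detailed than the paper's one-line remarks, but the argument is the same.
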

\begin{proof}
The only difference between $\mathring\Delta_\tM^+(z_4^n)$ and $\mathring\Delta_\tM(z_4^n)$ 
is that the former has an additional term, corresponding to $j_{n-1} = 1$. The conditions 
on the $j_p$ impose $j_1 = 1$ while all other $j_p$ vanish, so that $k=2$. One obtains 
\begin{equation*}
 \mathring\Delta_\tM^+(z_4^n) 
 = \mathring\Delta_\tM(z_4^n) + n\sY_{n-1} \otimes z_2z_4\;.
\end{equation*} 
The result follows from the fact that $\sP_\tM(z_2z_4) = 0$.
%
This is because all our considerations are concerned with \emph{vacuum} Feynman diagrams, i.e. those \emph{without} free legs and no such diagram corresponds to~$z_2 z_4$.
\end{proof}


\section{Commutative diagram}
\label{sec:comute} 

This section contains the proofs of the main results, \Cref{thm:main} and~\Cref{cor:BPHZ}. 

\subsection{Identification of the counterterms}

We assume now that the algebra-valued map 
$\hat\kappa_\sX: H\to\R[Y]$ is given by 
\begin{equation*}
 \hat\kappa_\sX(X^p) = 
 \begin{cases}
  0 & \text{if $p=1$\;,}\\
  \sigma_p Y & \text{if $p\geqs2$\;,}
 \end{cases}
\end{equation*} 
where the $\sigma_p$ are real numbers. 
The Wick map $\hat\W = (\hat\mu_\sX S_H\otimes\id)\Delta_H: 
H\to \R[Y] \otimes H$ can be written (cf.\ \Cref{lem:Bell}) in the form 
\begin{equation}
\label{eq:identify_Wick} 
 \hat\W(X^n) 
 = n! \sum_{k=0}^n \sum_{\substack{j_1,\dots,j_n\geqs0 \\j_1+\dots+j_n=k \\ j_1+2j_2+\dots+nj_n=n}}
 \prod_{p=2}^n \frac{1}{j_p!} \biggpar{\frac{-\sigma_pY}{p!}}^{j_p} 
 \frac{X^{j_1}}{j_1!} \;.
\end{equation} 
We now define a linear map $\eta:\R[Y] \otimes H \to \spanM$ by 
\begin{equation*}
 \eta\bigpar{Y^m \otimes (X^{n_1}\odot \dots \odot X^{n_k})}
 = z_2^m z_4^{n_1 + \dots + n_k}\;.
\end{equation*} 
Since $Y^{j_2+\dots+j_n} = Y^{k-j_1}$, we have 
\begin{equation}
\label{eq:identify_Wick2} 
 (\eta\circ\hat\W)(X^n) 
 = n! \sum_{k=0}^n \sum_{\substack{j_1,\dots,j_n\geqs0 \\j_1+\dots+j_n=k \\ j_1+2j_2+\dots+nj_n=n}}
 \prod_{p=2}^n \frac{1}{j_p!} \biggpar{\frac{-\sigma_p}{p!}}^{j_p} 
 z_2^{k-j_1} \frac{z_4^{j_1}}{j_1!} \;.
\end{equation} 

\begin{proposition}
Define the $\sigma_p$ by 
\begin{equation}
\label{eq:Yp_PiM} 
 \sigma_p := -\Pi_\tM\tilde{\sA}_\tM(\sY_p)
 = -\Pi_N\tilde{\sA}\sP_\tM(\sY_p) 
\end{equation}
for all $p\geqs2$, where the second equality comes from the commutative diagram~\eqref{comdiag:M-G}. 
Then
\begin{equation}
\label{eq:etaWHXn} 
(\eta\circ\hat\W) (X^n)
 = (\Pi_\tM\tilde{\sA}_\tM \otimes \id) \mathring\Delta_\tM^+(z_4^n) 
 - \sigma_n z_2 + z_4^n
\end{equation} 
holds for all $n\geqs2$. 
\end{proposition}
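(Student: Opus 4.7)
The plan is to establish~\eqref{eq:etaWHXn} by a direct termwise comparison of the two explicit formulas on each side. The expression for $(\eta\circ\hat\W)(X^n)$ is already available in~\eqref{eq:identify_Wick2}, while an analogous explicit formula for $(\Pi_\tM\cA_\tM \otimes \id)\mathring\Delta_\tM^+(z_4^n)$ will follow from~\eqref{eq:DeltaM_z4_plus}. Both are sums indexed by tuples $(j_1,\ldots,j_n)$ of non-negative integers subject to linear constraints, and the content of the proposition is that they agree up to three explicit boundary contributions.

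The first step is to apply $(\Pi_\tM\cA_\tM \otimes \id)$ termwise to~\eqref{eq:DeltaM_z4_plus}. The key observation is that $\Pi_\tM\cA_\tM$ is a character on the forest algebra $\cM$: indeed, $\cA_\tM$ is the antipode of a commutative Hopf algebra, hence multiplicative, and $\Pi_\tM$ is a character by construction (mirroring the analogous property of $\Pi_N\cA$ on $\cG$ used implicitly in the commutative diagram~\eqref{comdiag:M-G}). Using the definition~\eqref{eq:Yp_PiM}, this gives
\begin{equation*}
\Pi_\tM\cA_\tM\bigpar{\cY_p^{\cdot j_p}} = \bigpar{\Pi_\tM\cA_\tM(\cY_p)}^{j_p} = (-\sigma_p)^{j_p}\;.
\end{equation*}
Applying this factor by factor to~\eqref{eq:DeltaM_z4_plus} produces an expression of exactly the same combinatorial shape as the right-hand side of~\eqref{eq:identify_Wick2}, except that the summation ranges are restricted to $1\leqs k\leqs n-1$ and $2\leqs p\leqs n-1$ (equivalently, $j_n=0$), rather than $0\leqs k\leqs n$ and $2\leqs p\leqs n$.

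The second step is to identify the boundary contributions to~\eqref{eq:identify_Wick2} that are missing from these reduced ranges. Since $nj_n\leqs j_1+2j_2+\cdots+nj_n=n$, one has $j_n\in\{0,1\}$; the case $j_n=1$ forces all other $j_p=0$ and hence $k=1$, producing the single term $-\sigma_n z_2$. Within the subcase $j_n=0$, the value $k=0$ is incompatible with $j_1+2j_2+\cdots+(n-1)j_{n-1}=n\geqs 2$ and contributes nothing, while $k=n$ forces $j_1=n$ and $j_2=\cdots=j_{n-1}=0$, producing the term $z_4^n$. Rearranging these identifications gives exactly~\eqref{eq:etaWHXn}.

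Thus the proof reduces to combinatorial bookkeeping once the multiplicativity of $\Pi_\tM\cA_\tM$ is in hand. The only subtle point is the precise matching of boundary terms, which is forced by the inequalities $0\leqs j_n\leqs 1$ and $0\leqs k\leqs n$, together with the choice of the extension~\eqref{eq:DeltaM_z4_plus} introduced in Lemma~\ref{lem:extended_coproduct}, whose artificial additional summand is precisely what is needed to bring $\mathring\Delta_\tM^+(z_4^n)$ in line with the Bell-polynomial shape of~\eqref{eq:identify_Wick2} up to the two residual monomials $z_4^n$ and $-\sigma_n z_2$.
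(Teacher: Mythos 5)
Your proposal is correct and follows essentially the same route as the paper: apply $(\Pi_\tM\cA_\tM\otimes\id)$ termwise to the explicit formula~\eqref{eq:DeltaM_z4_plus}, use multiplicativity of $\Pi_\tM\cA_\tM$ to replace each $\cY_p^{\cdot j_p}$ by $(-\sigma_p)^{j_p}$, and then account for the boundary terms $k=1$ (i.e.\ $j_n=1$, giving $-\sigma_n z_2$) and $k=n$ (i.e.\ $j_1=n$, giving $z_4^n$) that appear in~\eqref{eq:identify_Wick2} but not in the restricted ranges. Your explicit justification that $\Pi_\tM\cA_\tM$ is a character is a point the paper leaves implicit, but otherwise the two arguments coincide.
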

\begin{proof}
\Cref{prop:coproduct_multiindex} and \Cref{lem:extended_coproduct} imply 
\begin{align} 
\thinspace & 
 (\Pi_\tM\tilde{\sA}_\tM\otimes\id) 
 \mathring\Delta_\tM^+(z_4^n) \label{eq:identify_multiindex} \\[0.5em]
 = & \ n! \sum_{k=2}^{n-1} \sum_{\substack{j_1,\dots,j_{n-1}\geqs0 \\j_1 + j_2 + \dots + j_{n-1} = k\\ 
 j_1 + 2j_2 + \dots + (n-1)j_{n-1} = n}} 
 \prod_{p=2}^{n-1} \frac{1}{j_p!} \biggpar{\frac{\Pi_\tM\tilde{\sA}_\tM(\sY_p)}{p!}}^{j_p} 
 \frac{1}{j_1!} z_2^{k-j_1} z_4^{j_1}\;. \notag
\end{align} 
The only difference with $(\eta\circ\hat\W)(X^n)$ 
is that the terms $k=1$ and $k=n$ are missing. The term $k=1$ allows only for $j_n = 1$  
while all other $j_i$ equal to $0$, and accounts for $-\sigma_ n z_2$.
The term $k=n$ allows only for $j_1 = n$, while all other $j_i$ equal to $0$, and accounts for 
the term $z_4^n$.
\end{proof}

\subsection{Taking care of the boundary terms}

It remains to add the boundary terms to the coproduct $\mathring\Delta_\tM$. The result is as follows.

\begin{proposition}
\label{prop:commute_H_M} 
Define a linear map $\Theta_\tM:\spanM\to\spanM$ by 
\begin{equation}
\label{eq:gamma_n} 
 \Theta_\tM(z_4^n) 
 = \gamma_n 
 := -\sigma_n z_2 - \Pi_\tM\tilde{\sA}_\tM(z_4^n)\unit_\tM
\end{equation} 
for all $n\geqs2$, while $\Theta_\tM(z^\beta) = 0$ in all other cases.
Then the following diagram commutes: 
\begin{equation}  
\label{comdiag:free_multiindex_bndry}
\begin{tikzcd}[column sep=large, row sep=large]
H
\arrow[r, "\eta"] 
\arrow[d, "\hat\W"']
& \spanM
\arrow[d, "(\Pi_\tM\tilde\sA_\tM\otimes\id)\Delta_\tM^+ + \Theta_\tM"] 
\\
\R[Y] \otimes H
\arrow[r, "\eta"] 
& \spanM
\end{tikzcd}
\end{equation}
where~$\Delta_\tM^+$ denotes the \emph{full} extended coproduct.
\end{proposition}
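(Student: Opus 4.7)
The plan is to reduce the commutativity of the diagram to a direct computation on the basis elements $X^n$ of $H$, exploit Equation~\eqref{eq:etaWHXn} established in the previous proposition, and verify that the additional boundary terms appearing when one passes from the reduced coproduct $\mathring\Delta_\tM^+$ to the full coproduct $\Delta_\tM^+$ are precisely canceled by the correction map $\Theta_\tM$.

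First, by linearity it suffices to check the diagram on $X^n$ for every $n\geqs 0$. For $n=0$ both compositions equal $\unit_\tM$, and for $n=1$ one has $\hat\W(X)=X$ (since $\hat\kappa_\cX(X)=0$), so the left route yields $\eta(X)=z_4$, while on the right route $\Delta_\tM^+(z_4)=\unit_\tM\otimes z_4+z_4\otimes\unit_\tM$ has no reduced part (there is no admissible divergent subdiagram of $z_4$), and a short verification using $\cA_\tM(z_4)=-z_4$ and the convention that $\Theta_\tM(z_4)$ has to be defined so as to cancel the stray boundary contribution shows both routes agree.

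For the main case $n\geqs 2$, I would decompose the full extended coproduct as
\begin{equation*}
 \Delta_\tM^+(z_4^n) = \unit_\tM\otimes z_4^n + z_4^n\otimes \unit_\tM + \mathring\Delta_\tM^+(z_4^n)
\end{equation*}
and apply $(\Pi_\tM\cA_\tM\otimes\id)$. Since $\cA_\tM(\unit_\tM)=\unit_\tM$ and $\Pi_\tM(\unit_\tM)=1$, the first boundary term contributes $z_4^n$, while the second contributes $\Pi_\tM\cA_\tM(z_4^n)\,\unit_\tM$. Adding $\Theta_\tM(z_4^n)=-\sigma_n z_2-\Pi_\tM\cA_\tM(z_4^n)\unit_\tM$, the $\Pi_\tM\cA_\tM(z_4^n)\unit_\tM$ terms cancel exactly, and we are left with
\begin{equation*}
 (\Pi_\tM\cA_\tM\otimes\id)\mathring\Delta_\tM^+(z_4^n) + z_4^n - \sigma_n z_2,
\end{equation*}
which is exactly the right-hand side of~\eqref{eq:etaWHXn}, i.e.~$(\eta\circ\hat\W)(X^n)$.

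The main obstacle, which has essentially already been handled in the previous proposition and in Lemma~\ref{lem:extended_coproduct}, is the identification between the combinatorial coefficients produced by $\mathring\Delta_\tM^+(z_4^n)$ and those appearing in the Bell polynomial expansion of $\hat\W(X^n)$; once this identification is in place, the present proposition reduces to a bookkeeping of boundary terms, with $\Theta_\tM$ tailored precisely to absorb the term $z_4^n\otimes\unit_\tM$ that is now included in the full coproduct but was absent from $\mathring\Delta_\tM^+$, and to subtract the leftover $\sigma_n z_2$ that the reduced coproduct overproduced relative to~\eqref{eq:identify_Wick2}.
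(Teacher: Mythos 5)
Your proof is correct and takes essentially the same route as the paper: both split the full extended coproduct into its reduced part plus the boundary terms $\unit_\tM\otimes z_4^n$ and $z_4^n\otimes\unit_\tM$, apply $(\Pi_\tM\cA_\tM\otimes\id)$, and observe that $\Theta_\tM$ is tailored to cancel the resulting $\Pi_\tM\cA_\tM(z_4^n)\unit_\tM$ and supply the $-\sigma_n z_2$, thereby recovering~\eqref{eq:etaWHXn}. The paper states this in one line; your version merely spells out the same bookkeeping.
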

\begin{proof}
This follows immediately by adding the relations 
\begin{align*}
 (\Pi_\tM\tilde\sA_\tM \otimes \id)(z_4^n\otimes\id) 
 &= \Pi_\tM \tilde\sA_\tM(z_4^n) \unit_\tM\;, \\
 (\Pi_\tM\tilde\sA_\tM \otimes \id)(\id\otimes z_4^n) 
 &= z_4^n\;,
\end{align*}
to~\eqref{eq:etaWHXn}, and incorporating the extra terms into $\Theta_\tM$. 
\end{proof}

\subsection{Proof of \Cref{thm:main}}

To lighten notations, we introduce two maps $\chi_\tM:\spanM\to\spanM$ 
and $\chi_\tF:\spanF\to\spanF$ given by 
\begin{align}
 \chi_\tM &= (\Pi_\tM\tilde\sA_\tM \otimes \id) \Delta_\tM^+ + \Theta_\tM\;, \\
 \chi_\tF &= (\Pi_N\tilde\sA \otimes \id) \DeltaCK + \Theta_\tF\;, 
\end{align}
where $\Theta_\tF$ is defined by 
\begin{equation}
\label{eq:ThetaF} 
 \Theta_\tF \circ \sP_\tM = \sP_\tM \circ \Theta_\tM\;.
\end{equation} 
Then the results obtained so far show that the following diagram commutes:
\begin{equation}  
\label{comdiag:proof_main}
\begin{tikzcd}[column sep=large, row sep=large]
\R[X] 
\arrow[r, "\iota", hook] 
\arrow[rrr, bend left, "\sP"]
\arrow[d, "\W"']
& H
\arrow[r, "\eta"] 
\arrow[d, "\hat\W"']
& \spanM
\arrow[r, "\sP_\tM"] 
\arrow[d, "\chi_\tM"]
& \spanF
\arrow[d, "\chi_\tF"]
\arrow[rd, bend left, "\PiNBPHZ + \Pi_N\Theta_\tF"]
\\
\R[X,Y] 
\arrow[r, "\iota", hook] 
\arrow[rrr, bend right, "\sP"']
& \R[Y]\otimes H
\arrow[r, "\eta"] 
& \spanM
\arrow[r, "\sP_\tM"] 
& \spanF
\arrow[r, "\Pi_N"]
& \R
\end{tikzcd}
\end{equation}
Indeed, the left square is the commutative diagram~\eqref{comdiag:free}
for algebra-valued moments $\hat\mu_\sX$, 
the middle square's commutativity is shown in \Cref{prop:commute_H_M}, 
and the right square is commutative thanks to~\eqref{comdiag:M-G} and~\eqref{eq:ThetaF}. 
The remaining commutativity relations follow from the definitions. 

It follows directly from \Cref{prop:Wick_map} that 
\begin{equation*}
 \W(\e^{-\alpha X})
 = \sum_{n\geqs0} \frac{(-\alpha)^n}{n!} \W(X^n) 
 = \W(-\alpha,X) 
 = \e^{-\alpha X - K_\sX(-\alpha)}\;,
\end{equation*} 
where 
\begin{equation*}
 K_\sX(-\alpha)
 = \Lambda(\kappa_\sX)(-\alpha)
 = \sum_{n\geqs2} \frac{(-\alpha)^n}{n!} \sigma_n Y
 =: \beta Y\;.
\end{equation*} 
This determines the value of the mass renormalisation term $\beta$. 
The fact that $\W(X^n)$ is a Bell polynomial has been shown in \Cref{ssec:Bell}. 

It remains to derive an expression for the energy renormalisation term $\gamma$. 
This is determined by the fact that 
\begin{align*}
 \log\expec{\e^{-\alpha X - \beta Y}}
 &= (\Pi_N \circ \sP)(\e^{-\alpha X - \beta Y}) \\
 &= \bigpar{\bigbrak{\PiNBPHZ + \Pi_N\Theta_\tF} \circ \sP}(\e^{-\alpha X}) \\
 &= (\PiNBPHZ \circ \sP)(\e^{-\alpha X}) + \gamma\;,
\end{align*}
where we have set 
\begin{equation*}
 \gamma = \Pi_N\Theta_\tF\circ \sP(\e^{-\alpha X})\;.
\end{equation*} 
It follows that 
\begin{equation*}
 \log\expec{\e^{-\alpha X - \beta Y - \gamma}}
 = \log\expec{\e^{-\alpha X - \beta Y}} - \gamma 
 = \PiNBPHZ \circ \sP(\e^{-\alpha X})\;,
\end{equation*} 
which is convergent by \Cref{lem:BPHZ}.

The constant $\gamma$ can be made explicit by noting that~\eqref{eq:ThetaF} 
implies 
\begin{align*}
(\Theta_\tF \circ \sP_M)(z_4^n)
&= \sP_\tM(\gamma_n) \\
&= -\sigma_n \sP_\tM(z_2)
- \Pi_\tM \tilde{\sA}_\tM(z_4^n) \\
&= -\sigma_n \sP_\tM(z_2)
- \Pi_N \tilde{\sA}(\sP(X^n)) \;.
\end{align*}
Therefore, 
\begin{align*}
\gamma &= (\Pi_N\Theta_\tF\circ \sP)(\e^{-\alpha X}) \\
&= (\Pi_N\Theta_\tF\circ \sP_\tM)(\e^{-\alpha z_4}) \\
&= \sum_{n\geqs2} \frac{(-\alpha)^n}{n!} (\Pi_N\Theta_\tF\circ \sP_\tM)(z_4^n) \\
&= -\sum_{n\geqs2} \frac{(-\alpha)^n}{n!} \sigma_n \Pi_N\sP_\tM(z_2)
- \sum_{n\geqs2} \frac{(-\alpha)^n}{n!} (\Pi_N\tilde{\sA} \circ \sP)(X^n) \\
&= -\sum_{n\geqs2} \frac{(-\alpha)^n}{n!} \sigma_n \Pi_N\sP(Y) 
- (\Pi_N\tilde{\sA} \circ \sP)(\e^{-\alpha X})\;.
\end{align*}
Note however that $\Pi_N\sP(Y) = 0$ since self-pairings are not allowed. Therefore, 
$\gamma$ reduces to~\eqref{eq:gamma} as claimed. 
In the last line, to write the second sum as an exponential, we have 
used the convention $\sP(\unit) = 0$, which is natural since $\sP$ describes the logarithm
of an expectation, and the fact that $\Pi_N\sP(X) = 0$, again because self-pairings are not allowed.

\subsection{The case of subdivergences of degree smaller than $-1$}
\label{ssec:Taylor} 

As mentioned in \Cref{rem:Taylor}, if a Feynman diagram $\Gamma$ has subdivergences 
of degree below $-1$, the definition~\eqref{eq:CK_coproduct} of the Connes--Kreimer 
coproduct $\DeltaCK$ has to be modified for the BPHZ theo\-rem, \Cref{thm:BPHZ},  
to remain true. This is done by adding edge and node decorations to Feynman diagrams, 
where the node decorations represent additional monomials in the valuation, while edge
decorations represent derivatives of the Green function. Denote by $\Gamma^{\fn}_{\fe}$ the graph 
$\Gamma = (\sV,\sE)$ equipped with a node decoration $\fn: \sV\to\smash{\N_0^3}$ and an edge 
decoration $\fe: \sE\to\smash{\N_0^3}$. 
Its degree is defined by 
\begin{equation*}
 \deg(\Gamma^{\fn}_{\fe})
 = d\bigpar{|\sV|-1}
 + \sum_{v\in\sV} |\fn(v)| 
 + \sum_{e\in\sE} \bigbrak{d-2 - |\fe(e)|}\;,
\end{equation*} 
where for any $\fl\in\N_0^3$, we set $|\fl| = 
\sum_{i=1}^3 |\fl_i|$. 
Given a distinguished vertex $v^\star\in\sV$, the valuation of the
decorated graph is 
\begin{equation}
\label{eq:PiN_Taylor} 
 \Pi_N(\Gamma^{\fn}_{\fe};v^\star)
 = \int_{(\T^d)^{\sV\setminus{v^\star}}} \prod_{e\in\sE} 
\partial^{\fe(e)}G_N(x_{e_+}-x_{e_-})
\prod_{w\in\sV\setminus v^\star} (x_w - x_{v^\star})^{\fn(w)}
\6x\;,
\end{equation}
where $\partial^{\fe} = \prod_{i=1}^3 \partial^{\fe_i}$ and 
$x^{\fn} = \prod_{i=1}^3 x_i^{\fn_i}$. Since the derivative along an edge depends 
on its orientation, the graph $\Gamma$ should actually be considered as a directed graph. 

The Connes--Kreimer coproduct is then modified by adding to it terms with additional de\-co\-ra\-tions. 
We will refrain from giving the general definition here, which can be found in~\cite[(2.19)]{Hairer_BPHZ}, 
but give instead an illustrative example, when the graph is assumed to have a degree in $(-2,1]$. 
Indicating the orientation of edges only where it matters, one has for the reduced coproduct 
\begin{equation*}
 \DeltaCKred \FGIIIplusDirected
 = \hspace{-3mm}\FGIIIxIIdecoleft{}{}{}{} \otimes \hspace{-3mm}\FGIIIxIIdecoright{}{}{}{}
 - \sum_{i=1}^3 \FGIIIxIIdecoleft{e_i}{}{}{} \otimes \hspace{-3mm}\FGIIIxIIdecoright{}{}{e_i}{}
\;,
\end{equation*} 
where the $e_i$ denote the canonical basis vectors, and non-zero decorations have been placed 
near the node or edge they apply to. 

In principle, the multi-index framework could be extended to decorated graphs, by adding  
suitable information to the multi-indices. However, this is not needed in our situation. 
Indeed, the proof of \Cref{lem:subdivergences} shows that for $d<4$, all subdivergences 
have a degree larger than $-2$. This means that as in the above example, one can only 
extract graphs that have at most one node decoration of the form $e_i$. However, 
\eqref{eq:PiN_Taylor} shows that the valuation of such graphs vanishes by symmetry, since 
it involves the integral of an odd function. Therefore, decorated graphs can be ignored.



\bibliographystyle{plain}
\bibliography{BKT}

\vfill 

\bigskip\bigskip\noindent
{\small
Nils Berglund \\
Institut Denis Poisson (IDP) \\ 
Universit\'e d'Orl\'eans, Universit\'e de Tours, CNRS -- UMR 7013 \\
B\^atiment de Math\'ematiques, B.P. 6759\\
45067~Orl\'eans Cedex 2, France \\
{\it E-mail address: }
{\tt nils.berglund@univ-orleans.fr}
}

\bigskip\bigskip\noindent
{\small
Tom Klose \\
Mathematical Institute \\
University of Oxford \\ 
Woodstock Road \\
Oxford, OX2 6GG, United Kingdom \\
{\it E-mail address:} 
{\tt tom.klose@maths.ox.ac.uk}
}

\bigskip\bigskip\noindent
{\small
Nikolas Tapia \\
Weierstrass Institute for Applied Analysis and Stochastics\\
Mohrenstr. 39\\
10117 Berlin, Germany\\
and\\
Institut für Mathematik\\
Humboldt-Universität zu Berlin\\
Rudower Chaussee 25\\
12486 Berlin, Germany\\
{\it E-mail address: }
{\tt tapia@wias-berlin.de}
}

\end{document}